 \newtheoremstyle{numberedstyle}
   {9pt}
   {9pt}
   {\normalfont}
   {}
   {\bfseries}
   {.}
   {\newline}
   {}
\newtheorem{thm}{Theorem}[section]%
\newtheorem{lem}[thm]{Lemma}%
\newtheorem{cor}[thm]{Corollary}%
\newtheorem{obs}[thm]{Observation}%
\newtheorem{lemdef}[thm]{Lemma and Definition}
\theoremstyle{numberedstyle}
\newtheorem{defn}[thm]{Definition}%
\newcommand{\B}{\mathcal{B}}
\newcommand{\F}{\mathcal{F}}
\title[Hyperbolic dimension and radial Julia sets]{Hyperbolic dimension and radial Julia sets \\
 of transcendental functions}
\author{Lasse Rempe}
\keywords{Hyperbolic dimension, radial Julia set, iterated function system, 
  entire function, meromorphic function, Ahlfors islands map, 
  Hausdorff dimension,
  conformal measure}
\thanks{The author was supported by EPSRC grant EP/E017886/1.}
\subjclass[2000]{Primary 37F35; Secondary 37F10, 30D05}
\renewcommand{\A}{\mathcal{A}}
\renewcommand{\P}{\mathcal{P}}
\newcommand{\hyp}{\operatorname{hyp}}
\newcommand{\Comp}{\operatorname{Comp}}
\begin{document} 
 \begin{abstract}
  We survey the definition of the radial Julia set $J_r(f)$ 
   of a meromorphic function
   (in fact, more generally, any \emph{Ahlfors islands map}),
   and give a simple
   proof that the Hausdorff dimension of $J_r(f)$ and the
   hyperbolic dimension $\dim_{\hyp}(f)$ always coincide.
 \end{abstract}
 \maketitle

 \section{Introduction}
  The study of the measurable dynamics of rational functions is 
   well-developed. In particular, it is known \cite{przytyckiconical,denkerurbanskisullivanconformal}
   that a large number
   of dynamically natural quantities coincide 
    (compare also the
   survey article \cite{urbanskisurvey}):
   \begin{enumerate}
    \item The Hausdorff dimension of the \emph{radial} 
     (also called \emph{conical}) 
     \emph{Julia set}
     $J_r(f)$, introduced by
     Urbanski \cite{urbanskiconical} and McMullen \cite{mcmullenradial}
     as the set of points where it is possible to go from small
     to big scales via univalent iterates. 
     (See Definition \ref{defn:radialjulia}.)
    \item The \emph{hyperbolic dimension}, introduced by Shishikura
      \cite{mitsudim} as the supremum of the Hausdorff dimensions
      of hyperbolic subsets of $J(f)$. (See Definition \ref{defn:hyperbolicdimension}.)
    \item The \emph{dynamical dimension}, defined by Denker and Urbanski
      \cite{denkerurbanskisullivanconformal}
      as the supremum over the Hausdorff dimensions of all ergodic
      invariant probability measures of positive entropy.
    \item The minimal exponent for which a \emph{conformal measure}
      in the sense of Sullivan exists.
   \end{enumerate}
  In well-controlled cases, these numbers also agree with the
   Hausdorff dimension of $J(f)$, but it is unknown whether this
   holds in general
   (for transcendental functions, this is often false 
    \cite{stallardhyperbolicmero,urbanskizdunik1}, and
    results of Avila and Lyubich \cite{avilalyubichfeigenbaum2} also suggest
    the existence of rational functions for which this equality fails). 
 
  In recent years, there have been a large number of cases where the
   above program was also carried through for
   transcendental entire and meromorphic functions. 
   (We refer the reader to the survey \cite{kotusurbanskisurvey}.) 
   This suggests the question which parts of the general rational theory
   carries through for \emph{arbitrary} 
   transcendental meromorphic functions. 
   The purpose of this note is to observe the following 
   (where $\dim_H$ denotes Hausdorff dimension). 

  \begin{thm}[Equality of hyperbolic and radial dimensions]
     \label{thm:main}
    Let $f:\C\to\Ch$ be any nonconstant and nonlinear meromorphic
     function. Then 
        \[ \dim_{\hyp}(f) = \dim_H( J_r(f)). \] 
  \end{thm}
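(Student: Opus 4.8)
The plan is to establish two inequalities. The inequality $\dim_{\hyp}(f) \le \dim_H(J_r(f))$ should be essentially immediate from the definitions: a hyperbolic set $X \subseteq J(f)$ is a compact, forward-invariant set on which $f$ is uniformly expanding and hyperbolic (in particular, disjoint from the post-singular set, so that inverse branches are well-defined and univalent on definite neighbourhoods). For such a set, every point admits univalent inverse iterates transporting small scales to a fixed large scale, so $X \subseteq J_r(f)$ (up to the usual subtlety that $J_r(f)$ may be defined via the function itself rather than all of $\C$ — this is where the Ahlfors islands property and the setup in the omitted Definition \ref{defn:radialjulia} will be used to make the bounded-distortion argument go through). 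Taking the supremum over hyperbolic sets $X$ gives $\dim_{\hyp}(f) \le \dim_H(J_r(f))$.

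The substantial direction is $\dim_H(J_r(f)) \le \dim_{\hyp}(f)$. Here I would fix $s < \dim_H(J_r(f))$ and aim to produce a hyperbolic set $X$ with $\dim_H(X) > s$. The radial Julia set is naturally an increasing union: writing $J_r^{(n,\delta)}(f)$ for the set of points that return to scale $\delta$ around definite points infinitely often via univalent branches of bounded degree/distortion, one has $J_r(f) = \bigcup J_r^{(n,\delta)}(f)$, and by countable stability of Hausdorff dimension some piece $Y := J_r^{(n,\delta)}(f)$ already has $\dim_H(Y) > s$. On $Y$ one has a genuine (non-autonomous, infinitely-generated) conformal iterated function system: each point has a nested sequence of pull-backs of a fixed ball with uniformly bounded distortion. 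The standard tool — Ma\~ne's theorem, whose setting is flagged as \ref{manesetup} in the preamble — lets me bound the loss of hyperbolicity coming from the singular set and extract, from the fractal structure on $Y$, a finite subsystem: a finite collection of univalent inverse branches defined on a common disc, mapping it compactly inside itself, with the resulting attractor avoiding the post-singular set. The self-conformal set generated by such a finite system is a hyperbolic subset of $J(f)$, and by choosing enough branches (using a Moran-type mass distribution / pressure estimate on $Y$) its dimension exceeds $s$.

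The main obstacle is precisely this passage from the qualitative "infinitely often, with bounded distortion" condition defining $J_r(f)$ to a quantitative finite iterated function system of the correct dimension, while controlling proximity to the post-singular set $P(f)$ (which for transcendental $f$ may be unbounded and is not a priori avoidable). The Ahlfors islands / Ma\~ne machinery is what makes this work: Ma\~ne-type arguments guarantee that, after discarding a set of points approaching $P(f)$ too fast (which costs nothing in dimension, by a Borel–Cantelli / mass estimate), the inverse branches one uses are univalent on fixed-size discs with uniform distortion bounds. Once that is in hand, a Frostman/Moran argument on a positive-dimensional piece of $Y$ extracts a finite expanding subsystem. I would also need to confirm the hyperbolic set I build is genuinely a subset of $J(f)$ and not merely of $\C$ — this follows since it is a repeller carrying an expanding dynamics of $f$ and repelling periodic points are dense in it.

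The plan, in order: (1) recall the decomposition of $J_r(f)$ into countably many "bounded-distortion return" pieces and reduce to a single positive-dimensional piece $Y$; (2) apply Ma\~ne's theorem to discard the slow-escaping-to-$P(f)$ part of $Y$ without loss of dimension, obtaining uniform univalence and distortion bounds for the relevant inverse branches; (3) run a Moran/pressure argument to extract from $Y$ a finite conformal iterated function system whose attractor $X$ is a hyperbolic set with $\dim_H(X) > s$; (4) combine with the easy inequality from the definitions to conclude equality. I expect step (2)–(3) — the quantitative extraction controlling distance to $P(f)$ — to be where essentially all the work lies; steps (1) and (4) are formal.
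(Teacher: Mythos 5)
Your overall skeleton --- easy inequality plus countable decomposition of $J_r(f)$, pick a piece of nearly full dimension, extract a finite conformal iterated function system, and conclude --- matches the paper's. But you have misidentified where the work lies, in a way that makes your plan both harder than necessary and, in this generality, unlikely to close.

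The most serious issue is your reliance on Ma\~n\'e's theorem to control proximity to the postsingular set. For a general transcendental meromorphic function (let alone an Ahlfors islands map) $P(f)$ can be unbounded or even dense, and no Ma\~n\'e-type result is available. This is precisely the obstruction the paper's introduction says it is circumventing: the whole point of the argument is that the definition of $J_r(f)$ \emph{already} hands you, at each radial point, a sequence of univalent pullbacks of a fixed-size disk, and Koebe distortion then gives uniform distortion for free. There is no further "discarding of the slow-escaping-to-$P(f)$ part": once you stratify $J_r(f)$ by a countable basis of disks $D_i$ of univalence (Lemma \ref{lemdef:disksofunivalence}), every point of the stratum $J_r(D_i)$ already comes with the univalent inverse branches you need. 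Looking for Ma\~n\'e here means you have not noticed that the needed distortion control is built into the definition.

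The second gap is a genuine localization issue that you do not address at all. After picking a stratum $Y = J_r(D_i)$ with $\dim_H Y > d'$, you need the dimension to be concentrated \emph{inside} the disk $D_i$ where the iterated function system will live; a priori the big-dimensional part of $Y$ could sit far away. The paper handles this with a small but essential observation: $J_r(D_i)$ is backward invariant up to critical points, and for a backward-invariant set its Hausdorff dimension is the same inside any open set meeting $J(f)$ (Observation \ref{obs:dimension}, which itself uses the Ahlfors islands property). Without this step your plan stalls. Finally, the passage from "dimension of $A$ exceeds $d'$" to "a finite subsystem with $\sum \lambda_j^{d'}>1$" is done in the paper by a clean $5r$-covering argument directly from the definition of Hausdorff measure; your appeal to a "Moran/pressure argument" is not wrong in spirit, but as written it is too vague to count as a proof of that step.
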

  \begin{remark}
   In fact, we prove this theorem even more generally for the class of
    \emph{Ahlfors islands maps} studied by Epstein 
    \cite{finitetype} and Oudkerk \cite{oudkerkahlfors} (see 
    Definition \ref{defn:ahlforsislands} below). 
  \end{remark}

  Previously, this equality was usually established using 
   conformal measures of the correct exponent 
   for the function $f$. However,
   the construction of such measures seems more difficult
   in the transcendental case, and, in currently
   known cases, uses some
   additional dynamical and function-theoretic assumptions. 

  Our
   main observation is that it is possible to avoid this step, and
   construct a suitable hyperbolic set directly from the definition of 
   the radial Julia set. Even for rational functions, this argument 
   (though not the theorem) seems to be new. We believe the construction
   is interesting due to its simplicity and wide generality.

  Estimating the hyperbolic dimension of $f$ (and hence the dimension of
   $J_r(f)$) is more difficult. Stallard 
   has shown that the Julia set of
   a transcendental meromorphic function can have any Hausdorff dimension
   in the interval $(0,2]$, and that for a transcendental entire function
   this dimension can be any number in the interval $(1,2]$. 
   While she proved \cite{stallardmeromorphichausdorff} that
   the hyperbolic
   dimension is always strictly greater than $0$ (the proof carries
   over directly to Ahlfors islands maps, and we sketch it in Corollary
   \ref{cor:dimensiongreaterthanzero} below), 
   it is not known whether the Hausdorff dimension of the Julia set of
   an entire transcendental function can equal the lower bound $1$.

  For functions with a \emph{logarithmic singularity} over infinity, though, 
   this bound cannot be achieved 
   (this was proved originally in
   \cite{stallardentirehausdorff} 
   for entire functions in the \emph{Eremenko-Lyubich class}
   $\B$, and subsequently strenghtened in 
   \cite{ripponstallardhausdorffmero,walterphilgwyneth}). Recently, 
   Bara{\'n}ski, Karpi{\'n}ska and Zdunik \cite{baranskikarpinskazdunik} 
   extended this result
   to show also that the hyperbolic dimension of such a function
   is strictly larger than one. We discuss a minor generalization of this
   fact in Theorem \ref{thm:logtracts}. 

 \subsection*{Structure of the article}
   In Section \ref{sec:defns}, we review a number
    of definitions which do not explicitly appear in the literature in this
    generality. 
   The beautiful
    concepts of finite-type and 
    Ahlfors islands maps, both due to
    Adam Epstein, have appeared only in the 
    unpublished references \cite{adamthesis,finitetype,oudkerkahlfors}, 
    which are not widely available,  
    so we take the 
    opportunity to introduce them here.  
   We also define radial Julia sets,
    whose well-known definition in the rational case carries over 
    verbatim but does not seem to have been formally stated in
    the transcendental setting.
    We also review the definition of hyperbolic dimension, and
    for the reader's convenience include a short summary on an elementary
    type of conformal
    iterated function system which will be used to construct hyperbolic
    sets. Finally, we prove Theorem
    \ref{thm:main} in Section \ref{sec:proof}, and discuss
    hyperbolic dimension in logarithmic tracts in an appendix.

 \subsection*{Notation}
  Throughout the rest of this article, $X$ is a compact Riemann surface
   and $W\subset X$ is a nonempty open subset.
   A reader who is interested only in the classical cases may wish to
    assume that
    $W=\C$ and $X=\Ch$ (for transcendental meromorphic functions)
    or $W=X=\Ch$ (for rational functions), and skip the
    subsection on Ahlfors islands maps in Section \ref{sec:defns}.

   In what follows, all distances, balls, derivatives etc.\ are 
    considered with respect to the 
     natural
     conformal metric on the Riemann surface $X$ 
    (i.e., the unique metric of constant curvature 
     $-1$, $0$ or $1$). Since $X$ is compact, for sufficiently small
     $\delta>0$, any disk
        \[ \D_{\delta}(z_0) = \{ z\in\ X: \dist(z_0,z)<\delta \} \]
     is simply connected with Jordan curve boundary. 
     Recall that, by Rad\`o's theorem, $X$ is second countable, so there
     is a countable base for the topology of $X$ consisting of such
     simply-connected balls. 

   If $A$ is a topological space and $a\in A$, then we denote by
    $\Comp_a(A)$ the connected component of $A$ containing $a$. 

  Finally, 
   we remark that ``meromorphic'' for us always includes ``entire'' as a 
   special case. 
 
 \subsection*{Acknowledgments}
   I would like to thank Boguslawa Karpinska, Janina Kotus, Feliks
   Przytycki, Gwyneth Stallard and Anna Zdunik for interesting discussions. 

\section{Definitions} \label{sec:defns}

 \subsection*{Ahlfors islands maps}
 
  The class of \emph{Ahlfors islands maps} was suggested by
   Adam Epstein \cite{finitetype} as a 
   generalization of both general transcendental meromorphic functions
   and his ``finite-type maps'' (see below), and studied in
   greater detail by him and Richard Oudkerk \cite{oudkerkahlfors}. 
   This class 
   provides a general 
   setting for the study of transcendental dynamics. We remark that
   a similar condition for a restricted class of maps (those defined
   outside a small subset of the Riemann sphere) was considered
   by Baker, Dominguez and Herring in \cite{bakerdominguezherring}.

 \begin{defn}[Ahlfors Islands maps] \label{defn:ahlforsislands}
 Let $W\subset X$ be open and nonempty. 
  We say that a nonconstant holomorphic function
  $f:W\to X$ is an \emph{Ahlfors islands map} if there is a finite number
  $k$ satisfying the following condition (known as the ``islands property'').
  \begin{center}
   \emph{Let $V_1,\dots,V_k\subset X$ be Jordan domains with pairwise disjoint
    closures, and let $U\subset X$ be open and connected with
    $U\cap \partial W\neq\emptyset$. Then for every component $U_0$ of 
    $U\cap W$,
    there is $i\in \{1,\dots,k\}$ such that $f$ has an island over $V_i$ in
    $U_0$. (That is, there is a
    domain $G\subset U_0$ such that $f:G\to V_i$ is a conformal isomorphism.)}
  \end{center}

  An Ahlfors islands map is \emph{elementary} if $W=X$ and
   $f:W\to X$ is a conformal isomorphism. 

  The set of all non-elementary Ahlfors islands maps from $W$ to $X$ is
   denoted $\A(W,X)$. We also set
      \begin{align*}
        \A(X) &:= \bigcup\left\{\A(W,X):{W\subset X \text{ open}}\right\}
           \quad\text{and}\\
        \A &:=  \bigcup \left\{ \A(X): {X \text{ is a compact Riemann Surface}}\right\}. 
      \end{align*}  
  \end{defn}

  An important feature of the class of Ahlfors islands maps is that it
   is closed under composition. We note that
   nonconstant rational functions are Ahlfors islands maps by
   definition. The name of this class is derived from the classical
   five-islands theorem \cite{walterahlfors}:

  \begin{thm}[Ahlfors Five Islands Theorem] \label{thm:ahlfors}
   Every transcendental meromorphic function $f:\C\to\Ch$ is an element
    of $\A(\C,\Ch)$ (with $k=5$). 
  \end{thm}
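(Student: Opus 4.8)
The plan is to verify the islands property directly with $k = 5$, by recognising the required assertion as the classical Ahlfors five islands theorem at the essential singularity of $f$.

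First I would unwind Definition \ref{defn:ahlforsislands} in the case $W = \C$, $X = \Ch$. Here $\partial W = \{\infty\}$, so the condition $U \cap \partial W \neq \emptyset$ says precisely that $U$ is a neighbourhood of $\infty$ in $\Ch$. Since a connected open subset of a surface remains connected after the removal of one point, $U \cap W = U \setminus \{\infty\}$ is connected, so $U_0 := U \setminus \{\infty\}$ is the unique component of $U \cap W$, and it contains a punctured neighbourhood of $\infty$, say $\{z \in \C : |z| > \rho\}$ for some $\rho > 0$. Thus the theorem reduces to the statement that, for any Jordan domains $V_1, \dots, V_5 \subset \Ch$ with pairwise disjoint closures and any $\rho > 0$, the map $f$ has an island over at least one $V_i$ inside $\{|z| > \rho\}$. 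Conjugating by $z \mapsto 1/z$, which maps $\{|z| > \rho\}$ conformally onto a punctured disk about $0$ and carries islands to islands, this is exactly the assertion that a holomorphic function with an isolated essential singularity has, in every punctured neighbourhood of the singularity, an island over at least one of any five Jordan domains with pairwise disjoint closures. (Note that $\infty$ is an essential singularity of $f$ precisely because $f$ is transcendental.) This is a standard consequence of Ahlfors's five islands theorem \cite{walterahlfors}, and I would simply invoke it.

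For orientation, I would indicate the mechanism and where the real work lies. One route applies Ahlfors's theory of covering surfaces to the restrictions of $z \mapsto f(1/z)$ to annuli $A_n$ shrinking to the singularity but contained in the prescribed punctured neighbourhood: the Second Fundamental Theorem yields an estimate of the form $\sum_{i=1}^{5} \bar n(A_n, V_i) \geq 3\, S(A_n) - h\, L(A_n)$, where $S(A_n)$ is the mean sheet number of the covering, $L(A_n)$ is the length of its relative boundary, $\bar n(A_n, V_i)$ counts the (unbranched) islands over $V_i$, and $h = h(V_1, \dots, V_5)$; because the singularity is essential, the annuli can be chosen so that $S(A_n) / L(A_n) \to \infty$, forcing the right-hand side to be positive for large $n$. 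An alternative is to take a Zalcman rescaling of $f$ near $\infty$, producing a nonconstant meromorphic limit $g : \C \to \Ch$; one applies the global five islands theorem to $g$ --- for a transcendental limit this is again Ahlfors's theorem, while for a rational limit of degree $d$ it is an elementary Riemann--Hurwitz count, since blocking all five $V_i$ would force at least $\frac{5}{2} d$ critical points into the pairwise disjoint sets $g^{-1}(V_i)$, whereas a degree-$d$ rational map has only $2d - 2$ --- and then transfers an island back to $f$ near $\infty$ using the stability of islands under locally uniform convergence. Either way, the only nontrivial ingredient is Ahlfors's covering-surface estimate: the reduction above is elementary, and the genuine difficulty --- controlling the boundary term $h\, L(A_n)$ against the sheet number $S(A_n)$, which is where the transcendence of $f$ is used --- lies entirely inside the classical theorem being quoted.
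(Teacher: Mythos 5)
Your proposal is correct and follows the same route as the paper, which simply cites \cite{walterahlfors} for this classical fact: once you unwind Definition~\ref{defn:ahlforsislands} for $W=\C$, $X=\Ch$ (noting $\partial W=\{\infty\}$, that a connected $U$ meeting $\infty$ has a single component $U\setminus\{\infty\}$ in $W$, and that this contains a punctured neighbourhood of $\infty$), the statement is exactly the local Ahlfors five islands theorem at the essential singularity. Your sketch of the two standard mechanisms (covering-surface estimates on annuli, or Zalcman rescaling with a Riemann--Hurwitz count in the rational-limit case) is accurate but beyond what the paper supplies, since the paper treats the theorem as a black box.
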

  
 There is also a 
   \emph{normal families version} 
   \cite[Theorem A.1]{walterahlfors} of Theorem \ref{thm:ahlfors},
    which we use a number of times below: 
   if $\F$ is a family of 
   analytic functions which have no islands over given 
   domains $V_1,\dots,V_n$ as above, then $\F$ is normal. 
 
  Another interesting subclass of $\A(W,X)$ is that of \emph{finite-type maps},
   first introduced in \cite[Chapter 3]{adamthesis}, which provides 
   a natural generalization of the dynamics
   of rational functions and meromorphic functions with finitely many
   singular values. A nonconstant analytic function
   $f:W\to X$ is a \emph{finite-type map} if it has
   only finitely many singular values in $X$, and furthermore there are no
   isolated removable singularites in $\partial W$. 

  \begin{thm}[Epstein]
   If $f:W\to X$ is a finite-type map, then $f$ is an Ahlfors islands
    map.
  \end{thm}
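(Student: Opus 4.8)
The plan is to build the required island by hand, by locating near $\partial W$ a univalent preimage of one of the $V_i$ that is small enough to be swallowed by $U_0$. Let $S\subset X$ be the (finite) set of singular values of $f$ and put $q:=\#S$; I take $k:=q+5$. Given Jordan domains $V_1,\dots,V_{q+5}$ with pairwise disjoint closures, at least five of them, which I relabel $V_1,\dots,V_5$, have closures disjoint from $S$, and it suffices to produce an island over one of these inside $U_0$. We may assume $q\ge 3$: if $X\setminus W$ has fewer than three points then $f$ is rational (nothing to prove) or a transcendental meromorphic function (Theorem~\ref{thm:ahlfors}) or $\partial W$ is a finite set of isolated essential singularities, covered by the easy case below; and $q\le 2$ similarly forces $W=X$ minus at most two points, again isolated essential singularities.

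The structural fact I use is that, by the very definition of the singular set, $f$ restricts to a covering map $W\setminus f^{-1}(S)\to X\setminus S$. Hence for each $i\le 5$ every connected component $G$ of $f^{-1}(V_i)$ (a \emph{sheet} over $V_i$) is mapped by $f$ conformally onto $V_i$. Equip $X\setminus S$ with its hyperbolic metric $\rho$; since $f$ is a covering, $\rho$ pulls back to the hyperbolic metric $\rho^*$ of $W\setminus f^{-1}(S)$, and $f$ carries each sheet over $V_i$ isometrically onto $(V_i,\rho)$, so all such sheets have one and the same finite $\rho^*$-diameter $D_i$. Because $\rho^*(\xi)\ge \frac{1}{2\,\dist(\xi,\partial W)}$, comparison of $\rho^*$ with the hyperbolic metric of the largest inscribed Euclidean disk (a Schwarz-lemma estimate) shows that a sheet over $V_i$ containing a point within Euclidean distance $\varepsilon$ of a point of $\partial W$ has Euclidean diameter less than $2\varepsilon$. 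Now the point-set input: since $U$ is connected and meets $\partial W$ while $U_0$ is a component of the open set $U\cap W$, one checks that $\emptyset\ne\partial U_0\cap U\subset\partial W$; fix $\zeta$ in this set, so $U$ is a neighbourhood of $\zeta$. If $f$ takes, at points of $U_0$ arbitrarily close to $\zeta$, some value in $V_1\cup\dots\cup V_5$, then the sheet through such a point is tiny and near $\zeta$, hence contained in $U$; being connected, contained in $W$, and meeting $U_0$, it lies in $U_0$ — the desired island. So it remains to rule out that $f|_{U_0}$ omits all of $V_1,\dots,V_5$ on some neighbourhood of $\zeta$ in $U_0$.

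Suppose it does. If $\zeta$ is isolated in $\partial W$, then a punctured disk around $\zeta$ lies in $U\cap W$ and, being connected and clustering at $\zeta\in\partial U_0$, in $U_0$; thus $f$ omits at least three values on a punctured neighbourhood of $\zeta$, so by the big Picard theorem $\zeta$ is a pole or a regular point of $f$, i.e.\ an isolated removable singularity — contradicting the finite-type hypothesis. The case where $\zeta$ is \emph{non-isolated} in $\partial W$ is the main obstacle. Here I would pass to a component $D_0$ of $U_0\cap\D_\delta(\zeta)$ with $\zeta\in\partial D_0$, on which $f$ omits $V_1,\dots,V_5$, and derive a contradiction using the normal families version of the Ahlfors five islands theorem applied to a rescaled family $\{f\circ\varphi_n\}$, where $\varphi_n\colon\D\to\D_{r(z_n)}(z_n)\subset D_0$ are Riemann maps for a suitably chosen sequence $z_n\to\zeta$: this family has no islands over $V_1,\dots,V_5$, hence is normal, and one must show that its limiting behaviour is incompatible with $f$ being of finite type — a nonconstant limit because $f$ then genuinely omits five disjoint Jordan domains along $\zeta$, and a constant limit because it forces $\zeta$ to be a direct transcendental singularity of $f|_{D_0}$, so that $D_0$ contains a logarithmic tract, over which $f$ again produces sheets of fixed $\rho$-size accumulating at $\zeta$. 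Carrying this out cleanly requires controlling the local structure of $W$ at $\zeta$, and it is exactly here that the finiteness of $S$ is indispensable: for a map with infinitely many singular values — e.g.\ $z\mapsto e^z$ restricted to a half-plane, which has a whole circle of asymptotic values — the statement is simply false, since such a map is not an Ahlfors islands map at all.
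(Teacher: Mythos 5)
First, note that the paper does not actually prove this statement: it is quoted as ``Epstein's theorem'' with the actual argument residing in the unpublished thesis \cite{adamthesis} and the manuscript \cite{finitetype}. So there is no proof in the paper against which to compare yours; the evaluation has to be on the merits of your proposal alone.

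Your framework is sensible and the first half is essentially correct. Taking $k=q+5$ so that at least five of the target domains avoid the singular set $S$; observing that $f\colon W\setminus f^{-1}(S)\to X\setminus S$ is a covering so that components of $f^{-1}(V_i)$ are conformal sheets; and using that each such sheet has the same hyperbolic diameter pulled back under the covering and therefore shrinks in Euclidean size as it approaches $\partial W$ --- these are all standard and work. One quantitative slip: ``Euclidean diameter $<2\varepsilon$'' is not correct with a universal constant. Tracking the estimate $\rho^*(\eta)\geq c/\dist(\eta,\partial W)$ along a path of hyperbolic length at most $D_i$ gives diameter at most $C(D_i)\,\varepsilon$ with $C(D_i)$ of the order $D_i e^{D_i/c}$; the constant depends on the hyperbolic diameter of $V_i$ in $X\setminus S$. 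This does not affect the strategy, since the diameter still tends to $0$ with $\varepsilon$, but the clean ``$2\varepsilon$'' you state is false.

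The genuine gap is the one you yourself flag. You need to show that $f|_{U_0}$ cannot omit all of $V_1,\dots,V_5$ in any one-sided neighbourhood of the chosen boundary point $\zeta$. Your treatment of the \emph{isolated} case via the big Picard theorem and ``no isolated removable singularities'' is fine. But the \emph{non-isolated} case is where the theorem actually lives, and there your argument is only a wish-list: you invoke the normal families version of the Ahlfors five islands theorem for a rescaled family $\{f\circ\varphi_n\}$, and then assert --- without proof --- that a nonconstant limit is ``incompatible with $f$ being of finite type'' and that a constant limit ``forces $\zeta$ to be a direct transcendental singularity'' so that a logarithmic tract produces sheets near $\zeta$. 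Neither of these implications is established, and it is not obvious that either holds as stated: normality of the rescaled family alone does not tell you anything about the singular values of $f$, and a constant rescaling limit does not automatically produce an asymptotic path landing at $\zeta$ inside $D_0$. There is also an implicit assumption that suitable disks $\D_{r(z_n)}(z_n)\subset D_0$ of comparable inner radius can be found, which again requires control on the local geometry of $W$ near $\zeta$ that you do not have. This is exactly the difficulty you acknowledge with ``Carrying this out cleanly requires controlling the local structure of $W$ at $\zeta$.'' Until that step is supplied, the proof is incomplete; the rest is a plausible reduction to it.

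Your closing example (the exponential on a half-plane, with a circle of limiting singular values) is a good sanity check that finiteness of $S$ is essential, but a counterexample showing the hypothesis is necessary does not, of course, close the gap in the proof of sufficiency.
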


 There are many examples of Ahlfors islands maps which do not fall into 
  either of the previous two classes. For example. a ``Valiron function''
  is an analytic map $f:\D\to\C$ which is unbounded in $\D$, but bounded
  on a ``spiral'', i.e., on 
  a curve $\gamma(t) = r(t)\cdot e^{2\pi i \theta(t)}$ with
   $r(t)\nearrow 1$ and $|\theta(t)|\to\infty$. Every such function is
  an Ahlfors map (see \cite[Remark in Section 4]{ripponbarthvaliron}) ---
  indeed, these functions are not normal near any point of $\partial\D$,
  and the result follows by the normal families version of the Ahlfors
  five islands theorem. For the same reason, other functions for which
  normality fails in this way --- e.g.\ the \emph{weak Valiron functions}
  considered in \cite{ripponbarthvaliron} --- also belong to the class
  of Ahlfors islands maps. 

 \smallskip

  The Fatou set $F(f)\subset X$
   of an Ahlfors islands map $f:W\to X$ is now defined
   as usual, except that, 
   in addition to those points which have a neighborhood
   where all iterates are defined and 
   form a normal family, 
   it will also include all 
   points with $f^j(z)\in X\setminus \cl{W}$
   for some $j$. Thanks to the Ahlfors islands property,
   Baker's proof of the density of repelling periodic points
   goes through as usual, and $J(f)$ has all the standard properties.

  \begin{thm}[Baker, Epstein] \label{thm:juliaproperties}
   Let $f\in \A(W,X)$. Then $J(f)$ is a nonempty
    perfect subset of $X$, and repelling periodic points of $f$ are dense
    in $J(f)$. Furthermore, if $J(f)\neq X$, then $J(f)$ has empty interior. 
  \end{thm}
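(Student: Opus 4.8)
The plan is to run Baker's classical argument, substituting the normal-families form of the Ahlfors islands theorem for Montel's theorem at every point where the latter is used for rational and meromorphic functions; the one genuinely new issue is the bookkeeping forced by $f$ being defined only on $W$. I would begin with the routine facts. The set $F(f)$ is open --- normality is an open condition, and $\{z : f^{\,j}(z)\in X\setminus\cl{W}\text{ for some }j\ge 0\}$ is open because $f$ is continuous on $W$ and $X\setminus\cl{W}$ is open --- so $J(f)$ is closed; for $z\in W$ one checks straight from the definition that $z\in J(f)$ if and only if $f(z)\in J(f)$, whence $F(f^n)=F(f)$ and $J(f)=J(f^n)$ for all $n\ge1$, and $J(f)$ is backward invariant in the obvious sense. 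Every point of $\partial W$ lies in $J(f)$ (no iterate is even defined on a neighbourhood of such a point, and it is not sent to $X\setminus\cl{W}$), so $J(f)\ne\emptyset$ whenever $W\ne X$; and when $W=X$ the map $f$ is a nonconstant, noninjective holomorphic self-map of a compact Riemann surface, so $\deg f\ge2$, and $J(f)\ne\emptyset$ by the classical Fatou--Julia theory on the sphere (resp.\ because such a self-map of a torus is uniformly expanding, higher genus being excluded by Riemann--Hurwitz).

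The engine of the proof is the \emph{blowing-up property}: if $U\subseteq X$ is open with $U\cap J(f)\ne\emptyset$ and $V_1,\dots,V_k\subseteq X$ are Jordan domains with pairwise disjoint closures, $k$ being the constant of the islands property, then some iterate $f^n$ has a simple island over some $V_i$ inside $U$. To prove this, fix $z_0\in U\cap J(f)$. If the forward orbit of $z_0$ stays away from $\partial W$, then all iterates are defined and analytic on a fixed neighbourhood $U_0\ni z_0$, the family $\{f^n|_{U_0}\}$ fails to be normal because $z_0\in J(f)$, and the normal-families version of the Ahlfors islands theorem supplies the required island. If instead the orbit of $z_0$ meets or accumulates on $\partial W$, one applies the islands property itself --- which is stated precisely for open sets meeting $\partial W$ --- to the image under a suitable iterate of a small neighbourhood of $z_0$, and pulls the resulting island back into $U$. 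Taking the $V_i$ to be small disks about $k$ prescribed points, one reads off that $\bigcup_{n\ge0}f^n(U)$ omits at most $k-1$ points of $X$; these omitted points are exceptional, hence lie in $F(f)$ by the usual local analysis.

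The remaining assertions follow as in the classical theory. \emph{Perfectness.} Choose $w_0\in J(f)$ that is not exceptional --- such a point exists because the exceptional set is finite and contained in $F(f)$ --- and, replacing $w_0$ by one of its preimages if necessary, arrange that $w_0$ is moreover not periodic. By the blowing-up property every open set meeting $J(f)$ contains a preimage of $w_0$, so $J(f)=\cl{\bigcup_n f^{-n}(w_0)}$; since $w_0$ is not periodic, the (infinite) backward orbit accumulates at each of its own points, so $J(f)$ has no isolated point and is perfect. \emph{Density of repelling periodic points.} Given $U$ open with $U\cap J(f)\ne\emptyset$, Baker's argument --- with the Ahlfors islands theorem replacing Montel's --- yields an iterate $f^N$ and a Jordan domain $V\supseteq\cl{U}$ such that $f^N$ has a simple island $G$ over $V$ with $G\subseteq U$ (so $\cl{G}$ is a compact subset of $U$); then $(f^N|_G)^{-1}\colon V\to G$ is a holomorphic self-map of the hyperbolic domain $V$ whose image $\cl{G}$ is a compact subset of $V$, hence a strict contraction of the hyperbolic metric of $V$ with an attracting fixed point $\zeta\in G$, and this $\zeta$ is a repelling periodic point of $f$ lying in $U$. \emph{Empty interior when $J(f)\ne X$.} If $J(f)$ had nonempty interior, pick $U$ open with $U\subseteq\operatorname{int}J(f)$; forward invariance gives $f^n(U)\subseteq J(f)$, while the blowing-up property gives $\bigcup_nf^n(U)\supseteq X\setminus E$ for a finite set $E$, so $J(f)\supseteq X\setminus E$ and hence, $J(f)$ being closed and $X$ having no isolated points, $J(f)=X$; contrapositively, $J(f)\ne X$ forces $\operatorname{int}J(f)=\emptyset$.

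I expect the only non-routine ingredient to be the blowing-up property, and within it the handling of the domain $W$: away from $\partial W$ the argument runs exactly as for meromorphic functions, but a careful case analysis is needed for a Julia point whose orbit approaches $\partial W$, and it is precisely here that the islands hypothesis (rather than mere holomorphy of $f$) enters essentially. Everything else is a transcription of Baker's and the standard arguments, and in the purely rational case nothing new happens.
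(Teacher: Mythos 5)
Your overall strategy tracks the paper closely: the paper likewise proves a single ``blowing‑up'' lemma adapted to the boundary of $W$ (its Lemma~\ref{lem:modifiedislands}) and then declares the remaining claims standard. So the architecture is the right one, and your treatment of perfectness, density of repelling cycles, and empty interior are the expected classical transcriptions. However, there is a genuine gap in the heart of your argument, namely in the proof of the blowing-up property.

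You dichotomize on the behaviour of the orbit of the single point $z_0$: either it ``stays away from $\partial W$'' or it ``meets or accumulates on $\partial W$.'' In the first branch you assert that all iterates of $f$ are then defined and analytic on a \emph{fixed} neighbourhood $U_0\ni z_0$. That implication is false in exactly the situation this theorem is meant to cover. Take $f$ a transcendental meromorphic function with at least one pole, so $W=\C$, $\partial W=\{\infty\}$; preimages of $\infty$ are dense in $J(f)$. Let $z_0$ be a repelling periodic point of $f$ whose cycle does not pass through $\infty$. Then the orbit of $z_0$ stays in a compact subset of $\C$, i.e.\ away from $\partial W$, yet \emph{every} neighbourhood of $z_0$ contains points whose orbit hits a pole, so no neighbourhood $U_0$ exists on which all iterates are defined, and the normal-families form of the Ahlfors theorem cannot be applied as you state. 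Moreover this case is not caught by your second branch either, since the orbit of $z_0$ itself never approaches $\partial W$; your two cases simply do not exhaust the possibilities.

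The correct dichotomy, which is what the paper uses in Lemma~\ref{lem:modifiedislands}, is on whether all iterates of $f$ are defined on (a shrinking of) the given set $U$, not on the behaviour of a single orbit. If yes, non-normality of $\{f^n|_U\}$ plus the normal-families Ahlfors theorem gives the island; if no, one argues (e.g.\ by connecting a point where some iterate escapes $\cl{W}$ to the Julia point $z_0$, whose orbit stays in $\cl{W}$, and invoking connectedness) that some point $w\in U$ has $f^n(w)\in\partial W$ for some $n$, and then the islands property applied to $f^n$ of a small neighbourhood of $w$ produces the island. In other words: for meromorphic and Ahlfors-islands maps it is the orbits of \emph{nearby} points, not the orbit of $z_0$, that typically reach $\partial W$, and that is precisely what drives the argument. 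Your proof would go through with this substitution; as written, the first branch rests on a false premise.

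One smaller remark: the paper's Lemma~\ref{lem:modifiedislands} is stronger than your blowing-up property --- it fixes a single $V_j$ that works uniformly for every open set meeting $J(f)$, by first manufacturing $k$ islands over one $V_j$ and then hunting for islands over those. That uniformity is not needed for Theorem~\ref{thm:juliaproperties}, so your weaker statement suffices here, but it is used in Corollary~\ref{cor:dimensiongreaterthanzero}. Also, your aside that the at most $k-1$ omitted points ``lie in $F(f)$ by the usual local analysis'' is not true in this generality --- for a transcendental entire function, $\infty$ is omitted by every $\bigcup_n f^n(U)$ with $U\subset\C$ yet lies in $J(f)$ --- though fortunately you only use finiteness of the omitted set, not its location.
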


 Note that $J(f)\neq\emptyset$ is trivial by definition as soon as 
   $\partial W\neq\emptyset$, while $\partial W = \emptyset$ means that we
   are in the well-known case of a rational self-map of the Riemann sphere
   or of a self-map of the torus. 
  The key fact in the proof of the density of repelling periodic
   points is the 
   following lemma, which we will use below and whose proof we sketch
   for completeness. The
   remaining claims in Theorem \ref{thm:juliaproperties}
   follow in the usual manner. 

 \begin{lem}[Islands near the Julia set] \label{lem:modifiedislands}
  Let $f\in \A(W,X)$. Then there is a number $k$ with the following property:
   if $V_1,\dots,V_k\subset X$ are Jordan domains with disjoint closures,
   then there is some $V_j$ such that every open set $U$ with 
   $U\cap J(f)\neq\emptyset$ contains an island of $f^n$ over $V_j$, for some
   $n$. 
 \end{lem}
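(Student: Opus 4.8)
The plan is to reduce the uniform statement ($\exists j\ \forall U$) to a family-normality argument, so that the quantifier order is handled at the level of the whole Julia set at once rather than disk-by-disk. Fix the constant $k$ from the Ahlfors islands property of $f$, and suppose for contradiction that the conclusion fails for some choice of Jordan domains $V_1,\dots,V_k$ with pairwise disjoint closures: that is, for \emph{each} $j\in\{1,\dots,k\}$ there is an open set $U_j$ meeting $J(f)$ such that no iterate $f^n$ has an island over $V_j$ inside $U_j$. Pick a point $\zeta_j\in U_j\cap J(f)$. Since $J(f)$ is perfect and invariant and repelling periodic points are dense (Theorem \ref{thm:juliaproperties}), I would first arrange --- shrinking $U_j$ if necessary --- that $U_j$ is a small simply-connected disk $\D_\delta(\zeta_j)$ on which all iterates of $f$ are defined as long as we stay in $W$; the point of choosing small disks is that the family $\F_j := \{ f^n|_{\D_\delta(\zeta_j)} : n\geq 0, f^n \text{ defined there}\}$ then has a clean description.

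The key step is then to intersect the given Jordan domains across all $j$: one does \emph{not} in general get a single $V_j$ from the componentwise data directly, which was the flaw in the naive argument. Instead, I would invoke the Ahlfors islands property of $f$ in its original form with the \emph{same} collection $V_1,\dots,V_k$ but applied inside a fixed open set $U$ that meets $\partial W$ --- or, when $\partial W = \emptyset$, use that $f$ is a rational map and argue directly via the classical blowing-up property of $J(f)$. The bridge between ``islands of $f$ somewhere'' and ``islands of $f^n$ in a prescribed neighbourhood of a Julia point'' is the standard expansion/blowing-up lemma: if $z_0\in J(f)$ and $U\ni z_0$ is open, then $\bigcup_n f^n(U\cap W)$ omits at most two points of $X$ (more precisely, covers $X$ minus the exceptional set), and moreover the covering is realized by \emph{univalent} branches on suitable subdomains because $J(f)$ has empty interior when $J(f)\neq X$, so critical values accumulate nowhere densely on it. Combining this with the failure assumption, each $\F_j$ would omit all the domains $V_1,\dots,V_k$ in the sense of the normal-families version of the Ahlfors five islands theorem (Theorem \ref{thm:ahlfors} and the remark following it): no $f^n|_{\D_\delta(\zeta_j)}$ has an island over any $V_i$ --- here one uses that if $f^n$ had an island over $V_i$ for $i\neq j$ then, $V_i$ being a Jordan domain containing points of $F(f)$ or mapping into a smaller disk, one could propagate to an island over $V_j$ in $U_j$, contradicting the choice of $U_j$. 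Hence $\F_j$ is a normal family on $\D_\delta(\zeta_j)$, which forces $\zeta_j\in F(f)$, contradicting $\zeta_j\in J(f)$.

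The main obstacle is exactly the propagation step in the previous paragraph: turning ``$f^n$ has no island over $V_j$ in $U_j$'' into ``$f^n$ has no island over \emph{any} $V_i$ in $U_j$'', uniformly in $n$. This is where one genuinely needs the interaction between the five domains rather than just one. I would handle it as follows: inside one of the $V_i$ choose a still smaller Jordan domain $V_i'$ with $\cl{V_i'}\subset V_i$; the collection $V_1',\dots,V_k'$ again has disjoint closures, and an island of $f^n$ over $V_i$ contains an island over $V_i'$. Now from a univalent branch of $f^{-m}$ mapping a neighbourhood of $\cl{V_i}$ into $U_j$ (which exists for suitable $m$ by the blowing-up of the Julia point $\zeta_j$ together with repelling periodicity, pulling back along a repelling cycle through or near $\zeta_j$ so that the inverse branch contracts), one transports an island of $f^n$ over $V_i$ inside $f^{-m}(\text{nbhd of }V_i)\subset U_j$ to an island of $f^{n+m}$ over $V_i$ inside $U_j$; iterating the hypothesis this way for each $i$ gives islands over \emph{all} of $V_1,\dots,V_k$ unless the family is already normal. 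Choosing the $V_i$ from the start so that each $\cl{V_i}$ lies in $F(f)$ (possible since $F(f)$ is open and dense or, if $F(f)=\emptyset$, one replaces this step by the observation that then every small disk already blows up to all of $X\setminus(\text{two points})$, so some $V_i$ is covered univalently) then contradicts normality of $\F_j$ at $\zeta_j$ and completes the proof. The only delicate point to check carefully is the existence of contracting univalent inverse branches reaching into $U_j$, which is precisely the classical argument behind density of repelling periodic points and is available here by Theorem \ref{thm:juliaproperties}.
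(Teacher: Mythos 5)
There is a genuine gap, and it sits exactly at the step you yourself flag as the ``main obstacle''. Your plan needs the implication ``an island of some iterate over $V_i$ inside $U_j$ yields an island of some iterate over $V_j$ inside $U_j$'', but the mechanism you propose does not produce it: transporting an island over $V_i$ by an inverse branch $f^{-m}$ gives again an island over $V_i$ (with $n+m$ in place of $n$), never an island over $V_j$. To convert an island over one target domain into an island over another you need, somewhere in the picture, a univalent iterate mapping a subdomain onto $V_j$ itself --- and producing such a domain is precisely the content of the lemma. The paper's proof supplies exactly this missing ingredient by a different device: it first uses the islands property (applied where $U\cap\partial W\neq\emptyset$, or the rational/torus case when $\partial W=\emptyset$) to find one index $j$ and $k$ domains $G_1,\dots,G_k$, each of which is mapped conformally onto $V_j$ by $f$ or an iterate; it then runs the normal-families version of the five islands theorem with the $G_i$ (not the $V_i$) as test domains, so that \emph{whichever} $G_i$ the non-normal family $\{f^n|_U\}$ produces an island over, composing with the conformal map $G_i\to V_j$ gives an island over the fixed $V_j$. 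Your argument has no analogue of this ``relay'' collection, and without it the quantifier problem you identified is not resolved.

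Several auxiliary steps are also unjustified. The existence of a univalent branch of $f^{-m}$ carrying a neighbourhood of $\cl{V_i}$ into $U_j$ is essentially the assertion that $f^m$ has an island over $V_i$ in $U_j$, so this part is circular; and the claim that the blow-up property is ``realized by univalent branches because $J(f)$ has empty interior, so critical values accumulate nowhere densely on it'' is false in general --- for transcendental and Ahlfors islands maps the (post)singular values may be dense in $J(f)$ or even in $X$, which is exactly why one must invoke the islands property rather than inverse branches. You also cannot ``choose the $V_i$ from the start so that each $\cl{V_i}$ lies in $F(f)$'': the $V_i$ are universally quantified in the statement (and $F(f)$ may be empty). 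Finally, shrinking $U_j$ to a disk on which all iterates are defined is not always possible: a Julia point may have orbits hitting $\partial W$ in every neighbourhood, and this case needs a separate argument (the paper handles it by making $f^n|_U$ proper and applying the islands property to $f^n(U)$, which meets $\partial W$, then pulling the resulting island back).
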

 \begin{proof}
  Let $k\geq 5$ be as in the islands property. First suppose that
   $\partial W\neq \emptyset$; then by the islands property, 
   $f$ has infinitely many islands over some $V_j$; in particular, we 
   can pick $k$ such islands $G_1,\dots,G_5$.
  
  If $\partial W = \emptyset$, then $f$ must be either a rational function
   or a self-map of the torus; in either case, it is easy to see that
   there is some $V_j$ and $k$ domains $G_1,\dots,G_k$ which are 
   islands of some \emph{iterates} of $f$. (For example, this follows
   by using the fact that $J(f)\neq\emptyset$ and the 
   normal families version of the
   Ahlfors five-islands theorem.)

  Now let $U$ be as in the statement of the lemma.
   If all iterates of $f$ are defined on $U$, then the normal families
   version of the Ahlfors five-islands theorem implies the existence
   of an island of some iterate of $f$ over one of the domains $G_1,\dots,G_k$,
   which is hence also an iterated island over the original domain $V_j$. 
   
  Otherwise, there is some $z_0\in U$ with $f^n(z_0)\in \partial W$. By 
   shrinking
   $U$, if necessary, we may assume that $f^n|_U$ is a proper map with
   no critical points except possibly $z_0$. By the islands property, 
   $f^n(U)$ contains an island of $f$ over one of the $G_1,\dots, G_k$, 
   which then pulls back to an island of $f^{n+2}$ over $V_j$, as desired.
 \end{proof}

 \begin{remark}
  Epstein's original definition of finite-type and Ahlfors islands maps
   allows $X$ to be the disjoint union of several compact Riemann surfaces,
   in which case the concept of ``elementary'' cases becomes slightly
   more involved, but everything else goes through as in the connected case.

  For finite-type maps, Epstein proves the absence of
   Baker and wandering domains as well as the finiteness of nonrepelling
   cycles.  
   We refer the reader to the thesis \cite{adamthesis} and the
   manuscript \cite{finitetype} 
   for further information.
 
  We remark that 
   Oudkerk \cite{oudkerkahlfors}
   suggested a slightly modified definition of Ahlfors islands maps
   which is equivalent to the above, but ensures that, for each $k$,
    the class of
   ``Ahlfors $k$-islands maps'' is closed under composition.
   (With our definition, this is true only for $k\geq 5$.) 
 \end{remark}

 \subsection*{Radial Julia sets} 
  \begin{defn}[Radial Julia sets] \label{defn:radialjulia}
   Let
    $f\in \A(W,X)$.
    The
    \emph{radial Julia set}
    $J_r(f)$ is the set of all points $z\in J(f)$ 
     with the following property: all forward images of $z$ are defined, and
     there is some $\delta>0$ such that, for infinitely many $n\in\N$, the 
     disk $\D_{\delta}(f^n(z))$ can be pulled back univalently along the
     orbit of $z$. (I.e., 
       \[ f: \Comp_{z}\bigl( \D_{\delta}(f^n(z))\bigr) \to
                             \D_{\delta}(f^n(z)) \]
     is a conformal isomorphism.)
  \end{defn}

  Recall that all disks and distances
   are understood with respect to the metric on the compact
   surface $X$ 
   (i.e., the spherical metric in the case of rational, entire or
    meromorphic functions, where $X=\Ch$). 
   We remark that, if $z\in J(f)$ and 
   $\limsup \dist(f^n(z),\P(f))>0$, where $\P(f)$ is the postsingular set
   of $f$, then $z\in J_r(f)$.

  \begin{lemdef}[Disks of Univalence] \label{lemdef:disksofunivalence}
   Let $z_0\in J_r(f)$. Then there exists
    a disk 
    $D = \D_{\delta}(w_0)\subset X$ with the following properties.
  \begin{enumerate}
   \item 
     Both $D$ and $\D_{2\delta}(w_0)$ are simply connected.
   \item  
    for infinitely many $n$, $f^n(z_0)\in D$ and 
    $\D_{2\delta}(w_0)$ pulls back univalently to $z_0$
     under $f^n$.
  \end{enumerate}
  In particular, if $n$ is as above and 
   $U$ is the component of $f^{-n}(D)$ containing $z_0$, then 
    \[
      \theta := \sup_{z\in U} \|Df^n(z)\|<\infty, \quad\text{and}\quad
           U \subset \D_{C/ \theta}(z_0). \]
  (Here $C$ depends only on $X$.) 
  Furthermore, $\theta\to \infty$ as $n\to\infty$.

  We will call such a disk $D$ a \emph{disk of univalence} for $z_0$. 
 \end{lemdef}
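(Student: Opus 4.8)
The plan is to extract the disk $D$ directly from the definition of $J_r(f)$, the only subtlety being that the definition provides univalent pullback of $\D_\delta(f^n(z_0))$ rather than of a slightly larger concentric disk. So the first step is to replace $\delta$ by $\delta/2$: by definition there is $\delta_0>0$ and an infinite set $N_0\subset\N$ such that, for $n\in N_0$, the disk $\D_{\delta_0}(f^n(z_0))$ pulls back univalently along the orbit. Choosing $\delta\le\delta_0/2$ small enough that $\D_{2\delta}(w)$ is simply connected with Jordan boundary for every $w\in X$ (possible since $X$ is compact, as noted in the Notation section), we get that $\D_{2\delta}(f^n(z_0))\subset\D_{\delta_0}(f^n(z_0))$ pulls back univalently as well. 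The centre $w_0$ is not literally fixed, but $f^n(z_0)$ ranges over the compact surface $X$, so by passing to a further infinite subset $N\subset N_0$ we may assume $f^n(z_0)\to w_\infty$ for some $w_\infty$; then $f^n(z_0)\in\D_{\delta/2}(w_\infty)$ for all large $n\in N$, and one takes $D:=\D_\delta(w_\infty)$ (shrinking once more if needed so that $\D_{2\delta}(w_\infty)$ is still simply connected). This gives (1) and (2).

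Next, fix such an $n$ and let $U=\Comp_{z_0}(f^{-n}(D))$, so that $f^n\colon U\to D$ is a conformal isomorphism and in fact extends to a conformal isomorphism from $\Comp_{z_0}(f^{-n}(\D_{2\delta}(w_0)))$ onto $\D_{2\delta}(w_0)$. The derivative bound is then the standard Koebe-type distortion estimate: let $g\colon D\to U$ be the inverse branch, which extends univalently to $\D_{2\delta}(w_0)$. Applying the Koebe distortion theorem (in the Euclidean chart given by $\D_{2\delta}(w_0)$, and absorbing the bounded metric distortion between the conformal metric on $X$ and the Euclidean metric on the chart into a constant $C=C(X)$) to $g$ on the twice-smaller disk $D$ yields that $\|Dg\|$ has bounded oscillation on $D$ and that $g(D)=U\subset\D_{C\|Dg(w_0)\|}(z_0)$. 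Writing $\theta:=\sup_{z\in U}\|Df^n(z)\|$, the distortion theorem gives $\|Dg\|\asymp 1/\theta$ on $D$, hence $\theta<\infty$ (it is a sup of a continuous function over a set compactly contained in $W$, since $\cl U\subset\Comp_{z_0}(f^{-n}(\D_{2\delta}(w_0)))$) and $U\subset\D_{C/\theta}(z_0)$.

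Finally, $\theta\to\infty$ as $n\to\infty$ (along $N$): if $\theta$ stayed bounded along a subsequence, then by the inclusion $U\subset\D_{C/\theta}(z_0)$ the sets $\Comp_{z_0}(f^{-n}(D))$ would not shrink to $z_0$, so they would all contain a fixed disk $\D_\rho(z_0)$ with $\rho>0$; but then every iterate $f^n$ would be defined on $\D_\rho(z_0)$ with $f^n(\D_\rho(z_0))\subset D$, a disk whose closure (by shrinking $\delta$ initially) omits three points of $X$, so $\{f^n|_{\D_\rho(z_0)}\}$ would be normal by Montel's theorem, contradicting $z_0\in J(f)$. I expect this last normality argument, and the bookkeeping needed to pass from univalence of $\D_{2\delta}(w_0)$ to the derivative inequalities with a clean constant $C=C(X)$, to be the only points requiring care; everything else is a direct unwinding of Definition \ref{defn:radialjulia} together with Koebe distortion.
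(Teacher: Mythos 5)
Your construction is essentially the paper's: take a limit point $w_\infty$ of the orbit $f^n(z_0)$ along the relevant subsequence, choose $\delta$ somewhat smaller than half the constant from Definition~\ref{defn:radialjulia}, set $D=\D_\delta(w_\infty)$, and invoke Koebe distortion for the derivative estimates. (The paper's own proof is essentially a three-line version of exactly this.)

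One step in your argument for $\theta\to\infty$ is misstated, though the conclusion is correct and follows from tools you already set up. You write that if $\theta$ stayed bounded, then ``by the inclusion $U\subset\D_{C/\theta}(z_0)$ the sets $\Comp_{z_0}(f^{-n}(D))$ would not shrink to $z_0$, so they would all contain a fixed disk.'' But $U\subset\D_{C/\theta}(z_0)$ is an \emph{upper} bound on the size of $U$; it says nothing that would prevent $U$ from shrinking. What you actually need is the complementary lower Koebe inclusion $U\supset\D_{c/\theta}(z_0)$ for some constant $c=c(X)$, which does follow from the two-sided estimate $\|Dg\|\asymp 1/\theta$ you already established (combined with the Koebe quarter theorem applied to the inverse branch $g$ around $f^n(z_0)\in D\Subset\D_{2\delta}(w_0)$). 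With that lower inclusion in hand, the rest of your normality argument --- the iterates $f^n$ are defined on a fixed disk $\D_\rho(z_0)$ with image in $\D_{2\delta}(w_0)$, hence normal, contradicting $z_0\in J(f)$ --- goes through as stated. There is also a tiny bookkeeping slip ($\delta\le\delta_0/2$ together with $\dist(f^n(z_0),w_\infty)<\delta/2$ gives $\D_{2\delta}(w_\infty)\subset\D_{5\delta/2}(f^n(z_0))$, which need not sit inside $\D_{\delta_0}(f^n(z_0))$); taking $\delta$ strictly below $\delta_0/2$ and using that $\dist(f^n(z_0),w_\infty)\to 0$ along the subsequence fixes it, and you already flag ``shrinking once more if needed.''
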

 \begin{proof}
   Let $\delta'$ be the constant from the definition of $J_r(f)$. 
    Let $w_0$ be a limit point of a sequence $z_n = f^n(z_0)$ of iterates 
    for which $\D_{\delta'}(z_n)$ can be pulled back univalently.
   Then we set $D := \D_{\delta}(w_0)$, where $\delta < \delta/2$ is
    sufficiently small so that $D$ and $\D_{2\delta}(w_0)$ 
    are simply connected. 

   The final claim follows from the Koebe distortion theorem.
 \end{proof} 

 \subsection*{Hyperbolic dimension}
  Hyperbolic dimension was introduced by Shishikura in his proof
    \cite{mitsudim} that the boundary of the Mandelbrot set has Hausdorff 
     dimension two. 
   \begin{defn}[Hyperbolic dimension] \label{defn:hyperbolicdimension}
    Let $f\in \A(W,X)$. A \emph{hyperbolic set} is a compact, forward-invariant
     set $K\subset W$ such that $\|Df^n|_K\| > \lambda > 1$
      for some $n$ and $\lambda$. 
   
    The \emph{hyperbolic dimension} of $f$ is the supremum of
     $\dim_H(K)$ over all hyperbolic sets. 
   \end{defn}

 The following fact is well-known (compare
    \cite[Lemma 3.4]{linefields}).
  \begin{lem}[Hyperbolic sets and the radial Julia set] 
       \label{lem:hyperbolicsets}
    Any hyperbolic set is contained in $J_r(f)$. Conversely, any
     compact and forward invariant subset of $J_r(f)$ is hyperbolic.

    In particular,
     the hyperbolic dimension $\dim_{\hyp}(f)$ is no larger than
     the Hausdorff dimension of $J_r(f)$.  \qedd
  \end{lem}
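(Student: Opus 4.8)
The plan is to prove the two inclusions of the lemma directly, the dimension bound then being immediate. Throughout I read the hyperbolicity condition in the expanding sense: $K$ is hyperbolic if $f(K)\subseteq K$ and $\|Df^N(z)\|\ge\lambda>1$ for some $N,\lambda$ and all $z\in K$ (all derivatives and disks in the conformal metric of $X$, as in the rest of the paper).

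\emph{Every hyperbolic set lies in $J_r(f)$.} Let $K$ be hyperbolic, with $N$ and $\lambda$ as above. Since $\|Df^N\|\ge\lambda>0$ on $K$, the map $f^N$ has no critical points on $K$. Moreover $K\subseteq J(f)$: a point $z\in K$ cannot lie in $F(f)$, because its forward orbit remains in $K$, so $\|Df^{\ell N}(z)\|=\prod_{i=0}^{\ell-1}\|Df^N(f^{iN}(z))\|\ge\lambda^{\ell}\to\infty$, contradicting normality of the iterates near $z$ (equivalently, by Marty's criterion, the local boundedness of their derivatives); and the other way to belong to $F(f)$, namely $f^j(z)\in X\setminus\cl W$ for some $j$, is excluded since $f^j(z)\in K\subseteq W$. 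Finally, because $f^N$ is a local biholomorphism at each point of the compact set $K$ with $\|Df^N\|\ge\lambda$ there, a standard compactness argument yields a uniform $\delta>0$ together with univalent inverse branches $\Psi_w:\D_\delta(f^N(w))\to X$, $\Psi_w(f^N(w))=w$, satisfying $\|D\Psi_w\|<\lambda^{-1/2}<1$ throughout $\D_\delta(f^N(w))$ (hence $\Psi_w(\D_\delta(f^N(w)))\subseteq\D_\delta(w)$) for every $w\in K$. For $z\in K$ the compositions $\Psi_z\circ\Psi_{f^N(z)}\circ\cdots\circ\Psi_{f^{(k-1)N}(z)}$ are then well defined and univalent on $\D_\delta(f^{kN}(z))$ for all $k$, and exhibit the univalent pullback of this disk along the orbit of $z$; since all forward images of $z$ are defined and $z\in J(f)$, we get $z\in J_r(f)$.

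\emph{Every compact forward-invariant $K\subseteq J_r(f)$ is hyperbolic.} Note first that a point of $J_r(f)$ is never a critical point of $f$: if $\D_\delta(f^n(z))$ pulls back univalently to $z$ under $f^n$ for some $n\ge1$, then $Df^n(z)\ne0$, hence $Df(z)\ne0$. Consequently $c_0:=\min(1,\min_{z\in K}\|Df(z)\|)\in(0,1]$, and $\|Df^j(w)\|\ge c_0^{j}$ for all $w\in K$ and $j\ge0$. Now fix $z\in K$ and a disk of univalence $D=\D_\delta(w_0)$ for $z$ as in Lemma--Definition~\ref{lemdef:disksofunivalence}. For the infinitely many $n$ with $f^n(z)\in D$ and $\D_{2\delta}(w_0)$ pulling back univalently to $z$, set $U_n=\Comp_z(f^{-n}(D))$; restricting the inverse branch on $\D_{2\delta}(w_0)$ to the concentric half-scale disk $D$ and invoking the Koebe distortion theorem, $\|Df^n\|$ varies on $U_n$ by at most a bounded factor, while $\sup_{U_n}\|Df^n\|=\theta\to\infty$ by Lemma--Definition~\ref{lemdef:disksofunivalence}. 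Thus, choosing $n=n(z)$ large, we obtain an open neighbourhood $U_{n(z)}\ni z$ on which $\|Df^{n(z)}\|>2$. By compactness, finitely many such sets $U_{n(z_1)},\dots,U_{n(z_p)}$ cover $K$; set $\nu:=\max_i n(z_i)<\infty$ (each $n(z_i)\ge1$).

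The remaining step, which I expect to be the real obstacle, is to upgrade this \emph{local, point-dependent} expansion to \emph{uniform} expansion by a \emph{single} iterate. Given $z\in K$, trace its orbit through the cover: $z\in U_{n(z_{i_0})}$, so $\|Df^{n(z_{i_0})}(z)\|>2$; then $f^{n(z_{i_0})}(z)\in K$ lies in some $U_{n(z_{i_1})}$, and so on. After $k$ blocks the chain rule gives $\|Df^{s_k}(z)\|>2^k$ with $s_k\in[k,k\nu]$. For a large \emph{fixed} $N$, let $k$ be maximal with $s_k\le N$; then $N-s_k<s_{k+1}-s_k\le\nu$ and $(k+1)\nu\ge s_{k+1}>N$, so
\[
  \|Df^N(z)\| \ge c_0^{N-s_k}\,\|Df^{s_k}(z)\| > c_0^{\nu}\,2^{k} \ge c_0^{\nu}\,2^{N/\nu-1}.
\]
Since $c_0$ and $\nu$ are fixed and $2^{1/\nu}>1$, the right-hand side exceeds $1$ once $N$ is large; for such $N$ we get $\|Df^N(z)\|>1$ uniformly on $K$, i.e.\ $K$ is hyperbolic. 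The non-existence of critical points in $J_r(f)$ enters here decisively: the uniform bound $c_0>0$ is exactly what controls the cost of padding a variable-length expansion out to the common length $N$. Combining the two inclusions, every hyperbolic set is contained in $J_r(f)$, so $\dim_H(K)\le\dim_H(J_r(f))$ for each hyperbolic $K$, and passing to the supremum gives $\dim_{\hyp}(f)\le\dim_H(J_r(f))$.
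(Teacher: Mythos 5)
The paper does not prove this lemma: it labels it as well-known, refers the reader to \cite[Lemma 3.4]{linefields}, and marks the statement with a tombstone. So there is no in-paper proof to compare against; what you have written is a self-contained substitute, and it is correct. Both halves follow the standard route one would expect. For the forward inclusion, the extraction of inverse branches of $f^N$ on disks of a uniform radius $\delta$ with $\|D\Psi_w\|<\lambda^{-1/2}$ is the usual compactness argument (no critical point of $f^N$ meets $K$, critical points are discrete in $W$ and $K$ has positive distance from $\partial W$, so they stay uniformly away from $K$; then a contradiction/subsequence argument gives the uniform $\delta$); you state this somewhat telegraphically, but it is a genuinely standard fact and the conclusion that the nested compositions realize the pullbacks of $\D_\delta(f^{kN}(z))$ is right. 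For the converse, you correctly isolate the only nontrivial point — that $J_r(f)$ contains no critical points, so $c_0=\min_K\|Df\|>0$ — and use it to pad a variable-block expansion (obtained from Lemma~\ref{lemdef:disksofunivalence} and Koebe distortion on the half-scale disk, then a finite subcover of $K$) to a single iterate $N$. The estimate $\|Df^N(z)\|>c_0^{\nu}2^{N/\nu-1}$ with $\nu=\max_i n(z_i)$ is exactly the right bookkeeping, and for $N$ large it gives a uniform $\lambda>1$. The dimension inequality then follows from the first inclusion as you say. In short: correct, and in the spirit of the reference the paper cites rather than diverging from it.
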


 \subsection*{Conformal iterated function systems}
  Hyperbolic sets are usually obtained by constructing a conformal
   iterated function system consisting of iterates of the function $f$. 
   We review the basic case of the theory for the reader's convenience. 
  \begin{defn}
    Let $V\subset X$ be simply connected. Also let 
     $U_1,\dots, U_m\subsetneq V$ be pairwise
     disjoint simply connected domains with
     $\cl{U_j}\subset V$, and let
     $g_j:V\to U_j$ be conformal isomorphisms. 
     Then we say that $(g_j)$ is an
   \emph{elementary
     conformal iterated function
     system}. 

   The \emph{limit set} of the system $(g_j)$ is the set
      \[ L := \bigcap_{n\geq 0} 
                \bigcup_{(j_1,\dots,j_n)} 
                  g_{j_n}\bigl(\dots(g_{j_1}(V)\bigr). \]
  \end{defn}

 (By the Schwarz lemma, the maps $g_j$ are contractions, and hence
  $L$ is a compact, totally disconnected set.)

  In the well-known case where all $g_j$ are
   affine similarities, the Hausdorff dimension of the
   limit set is the unique value $t$ with
     \begin{equation}
     \sum (\lambda_j)^t = 1,  \label{eqn:similaritydimension}
     \end{equation}
   where $|g_j'| \equiv \lambda_j <1$ \cite{falconerfractalgeometry}. 
   We will use the
   following lemma in the general case. 

  \begin{lem}[Hausdorff dimension of limit sets] \label{lem:limitsets}
    Suppose that $L$ is the limit set of a conformal iterated function system
     $(g_j)$ as above.  
    Set $\lambda_j := \inf_{z\in V} \|Dg_j(z)\|$, and suppose that
     \[ \sum {\lambda_j}^t \geq 1. \]
 
    Then $\dim_H(L)\geq t$. \qedd
  \end{lem}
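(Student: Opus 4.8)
The plan is to produce a Borel probability measure $\mu$ on $L$ satisfying a Frostman-type estimate $\mu(\D_r(x))\le C\,r^{t}$ for every $x\in L$ and every sufficiently small $r>0$; the mass distribution principle (\cite{falconerfractalgeometry}) then yields $\dim_H(L)\ge t$. We may assume that $\lambda_j>0$ for every $j$ (indices with $\lambda_j=0$ do not contribute to $\sum_j\lambda_j^{t}$ when $t>0$, the case $t=0$ being trivial, and discarding the corresponding maps only replaces $L$ by a subset) and that $m\ge2$ (if $m=1$ then $L$ is a single point and, as $\lambda_1<1$, the hypothesis $\sum_j\lambda_j^{t}\ge1$ fails for $t>0$). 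Since $\sum_j\lambda_j^{t}\ge1$ we may choose $p_1,\dots,p_m>0$ with $\sum_j p_j=1$ and $p_j\le\lambda_j^{t}$ (for instance $p_j:=\lambda_j^{t}/\sum_k\lambda_k^{t}$). For a finite word $\tau=(\tau_1,\dots,\tau_n)$ write $g_\tau:=g_{\tau_1}\circ\cdots\circ g_{\tau_n}$ and $L_\tau:=g_\tau(L)$; these pieces code $L$ in the usual way, and if $\mu$ is the push-forward of the Bernoulli measure with weights $(p_j)$ under the coding map $\{1,\dots,m\}^{\N}\to L$, then $\mu(L_\tau)\le p_{\tau_1}\cdots p_{\tau_n}\le\bigl(\prod_{i}\lambda_{\tau_i}\bigr)^{t}$.

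Next, I would establish bounded distortion. Put $K:=\bigcup_j\cl{U_j}$, a compact subset of $V$. By the Koebe distortion and one-quarter theorems, applied in local charts with the bounded spherical corrections already used in the proof of Lemma and Definition \ref{lemdef:disksofunivalence}, there is a constant $D\ge1$ depending only on $V$, $K$ and $X$ such that every $h$ that is univalent on $V$ satisfies $\sup_K\|Dh\|\le D\inf_K\|Dh\|$ and maps each ball $\D_s(p)\subset V$ onto a set containing $\D_{\|Dh(p)\|\,s/D}(h(p))$. I apply this with $h=g_\tau$, which is univalent on $V$ with $g_\tau(K)\subset U_{\tau_1}\subset K$, so that the chain rule --- evaluated at points of $K\subset V$ --- gives $\prod_{i}\lambda_{\tau_i}\le\inf_K\|Dg_\tau\|$; this is the one place where the infimum over \emph{all} of $V$ in the definition of $\lambda_j$ is used, and it only makes the resulting estimate easier. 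Fixing a ball $\D_{s_0}(p_0)\subset K$ and writing $t_\tau:=\|Dg_\tau(p_0)\|$, the distortion bounds give (with all constants depending only on the system) $\operatorname{diam} L_\tau\le\operatorname{diam} g_\tau(V)\asymp t_\tau$, and that $g_\tau(V)$ contains a ball of radius $\asymp t_\tau$, and $\mu(L_\tau)\le t_\tau^{\,t}$. Finally, since $\cl{U_j}\subset V$, the maps $g_j$ are uniform contractions in the hyperbolic metric of $V$, which is comparable to the metric of $X$ on $K$; hence $t_{\omega|n}\to0$ as $n\to\infty$ for every infinite word $\omega$ (here $\omega|n$ denotes the length-$n$ initial word).

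Then, fixing $x\in L$ and a small $r>0$: for each $y\in\D_r(x)\cap L$ with code $\omega$, let $n=n(y)$ be minimal with $\operatorname{diam} g_{\omega|n}(V)\le r$, which exists by the previous paragraph. Minimality, together with the one-step comparison $t_{\omega|n}\asymp t_{\omega|(n-1)}$, forces $t_{\omega|n(y)}\asymp r$; consequently $L_{\omega|n(y)}\subset\D_{2r}(x)$, the region $g_{\omega|n(y)}(V)$ contains a ball of radius $\ge c_0 r$, and $\mu(L_{\omega|n(y)})\le t_{\omega|n(y)}^{\,t}\le(C_0 r)^{t}$. By the minimality in the choice of $n(y)$ no selected word is a proper initial segment of another, so the distinct regions $g_{\omega|n(y)}(V)$ arising this way are pairwise disjoint; being contained in $\D_{2r}(x)$ and each containing a ball of radius $\ge c_0 r$, there can be at most $N$ of them, where $N$ depends only on $c_0$ and the uniformly bounded geometry of the compact surface $X$. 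Summing over these at most $N$ pieces gives $\mu(\D_r(x))\le N(C_0 r)^{t}=C\,r^{t}$, and the mass distribution principle yields $\dim_H(L)\ge t$.

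The main obstacle I anticipate is the second paragraph: establishing that $\operatorname{diam} L_\tau$, the inradius of $g_\tau(V)$ and $\|Dg_\tau(p_0)\|$ are uniformly comparable over all words $\tau$ (the ``bounded distortion property''), and combining this with Koebe's one-quarter theorem --- for it is exactly this that caps, in the final step, the number of competing pieces that a small ball can meet. A long, thin image domain $U_j$ causes no trouble, since conformality forces the \emph{finer} structure of $L$ to be round at every relevant scale.
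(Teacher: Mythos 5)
Your proof is correct, but it takes a genuinely different route from the paper's. The paper treats the lemma as an immediate corollary of the Bowen--Mauldin--Urba\'nski pressure formula cited from \cite{urbanskimauldin}: the Hausdorff dimension of $L$ is the unique zero $t_0$ of the strictly decreasing pressure function $P$ in (\ref{eqn:pressure}), and since the chain rule gives $\|D(g_{j_1}\circ\cdots\circ g_{j_n})\|\ge\prod_i\lambda_{j_i}$ on $V$, each term in the defining limit is at least $\log\bigl(\sum_j\lambda_j^t\bigr)\ge 0$; hence $P(t)\ge 0$ and $t_0\ge t$. You instead dispense with the thermodynamic formalism and prove the needed one-sided bound from scratch: choose Bernoulli weights $p_j\le\lambda_j^t$ (possible exactly because $\sum_j\lambda_j^t\ge 1$), push forward to a measure $\mu$ on $L$, establish bounded distortion via Koebe so that $\|Dg_\tau\|$, $\diam g_\tau(V)$, and the inradius of $g_\tau(V)$ are uniformly comparable across all words $\tau$, run the Moran ``stopping-time'' covering of a small ball by incomparable cylinders of diameter $\asymp r$, bound their number by a packing/one-quarter-theorem argument, and invoke the mass distribution principle. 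In effect you reprove the easy half of Bowen's formula for a finite conformal IFS; this is longer but elementary and self-contained, avoids the appeal to \cite{urbanskimauldin}, and makes transparent precisely where the hypothesis $\sum_j\lambda_j^t\ge1$ is used (to normalize the Bernoulli weights), whereas in the paper's argument it enters only as a sign condition on the pressure. Both proofs rest on the same chain-rule inequality $\prod\lambda_{\tau_i}\le\inf_V\|Dg_\tau\|$ and on Koebe distortion, and your preliminary reductions (to $\lambda_j>0$, $t>0$, $m\ge2$) are correct and needed for the Frostman estimate to run cleanly.
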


  In fact, the Hausdorff dimension of the limit set can be computed exactly
   by generalizing the formula (\ref{eqn:similaritydimension}) to the
   more general setting as follows. We set 
     \begin{equation}
      \label{eqn:pressure}
         P(t) := 
         \lim_{n\to \infty}\frac{1}{n}  \log\left(
       \sum_{(j_1,\dots,j_n)} \|D(g_{j_1}\circ\dots\circ g_{j_n})(z_0)\|^t\right), 
     \end{equation}
     where $z_0\in V$ is some base point. 
     (The existence of the limit
     and its independence from $z_0$ follow from the Koebe distortion
     theorem.)

   Then the Hausdorff dimension $t$ is the unique zero of the strictly
    decreasing function
    $P(t)$ (usually called the ``pressure function''). 
    (For a proof, in the more general setting of
     \emph{infinite} conformal iterated function systems in any
     dimension, see \cite{urbanskimauldin}.) 

   We obtain 
    the above lemma
    as an obvious corollary, since the terms in the limit in
    (\ref{eqn:pressure}) are bounded from below by    
  \[ \frac{1}{n} \log\left( \left(\sum \lambda_j^t\right)^n\right)
                       = \log\left(\sum \lambda_j^t\right). \]

 \begin{cor}[Hyperbolic dimension of Ahlfors islands maps]
     \label{cor:dimensiongreaterthanzero}
  Let $f\in \A(W,X)$ be a non-elementary Ahlfors islands map. Then
   $\dim_{\hyp}(f) > 0$. 
 \end{cor}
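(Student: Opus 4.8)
The plan is to construct an explicit elementary conformal iterated function system consisting of inverse branches of iterates of $f$, and then invoke Lemma~\ref{lem:limitsets} to conclude that its limit set --- a hyperbolic set, by Lemma~\ref{lem:hyperbolicsets} --- has positive Hausdorff dimension. The starting point is Lemma~\ref{lem:modifiedislands}: fix $k$ as in that lemma, and choose $k$ Jordan domains $V_1,\dots,V_k$ with pairwise disjoint closures, so small that each $\D_{2\delta}$-neighbourhood of the relevant domain is still simply connected and the domains remain disjoint even after slight fattening. The lemma gives us one distinguished $V_j$ such that every open set meeting $J(f)$ contains an iterated island over $V_j$.

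First I would use this to produce, for a fixed point $z_0\in J(f)$ (or any point of $J(f)$), many disjoint iterated islands over $V_j$ sitting inside a single small simply connected disk $V$ around a suitable point of $J(f)$. The point is to iterate the island-finding procedure: starting from $V$, we get an island $G_1\subset V$ of some $f^{n_1}$ over $V_j$; since $J(f)$ has empty interior is false in general but $J(f)$ is perfect and the island $G_1$ meets $J(f)$, we can find inside $G_1$ (or rather inside $V\setminus\cl{G_1}$, which still meets $J(f)$ since $J(f)$ is perfect and not contained in any single island's closure after shrinking) further islands, and repeat to obtain finitely many pairwise disjoint islands $G_1,\dots,G_m\subset V$, each mapped conformally onto $V_j$ by some iterate $f^{n_i}$. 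Composing with a fixed conformal chart identifying $V_j$ with $V$ (here I use that we arranged $V_j$ and $V$ to be conformally comparable, or simply postcompose each $f^{n_i}$ with a fixed Möbius/chart map sending $V_j$ into $V$; shrinking $V_j$ if needed we may assume the image lies well inside $V$), we obtain branches $g_i\colon V\to U_i\subset\subset V$ inverse to these compositions, forming an elementary conformal iterated function system.

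The main obstacle --- and the place where one has to be slightly careful --- is ensuring that $\sum \lambda_i^t\geq 1$ for some $t>0$, i.e. that we have enough branches with not-too-small derivative. By the Koebe distortion theorem, on each $U_i$ the map $g_i$ has derivative bounded below by a constant times $\operatorname{diam}(U_i)$, so $\lambda_i\geq c\cdot\operatorname{diam}(U_i)$ for a universal $c>0$. Since the $U_i$ are pairwise disjoint subsets of the bounded set $V$, we automatically have $\sum \operatorname{diam}(U_i)^2\leq C'$ for a constant depending only on $V$; this gives an \emph{upper} bound, not a lower one, so a single round of island extraction with a \emph{fixed} number $m$ of branches need not suffice if the branches are very contracting. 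The fix is that an elementary iterated function system can be iterated: replacing the system $(g_i)$ by the system of all $N$-fold compositions $(g_{i_1}\circ\dots\circ g_{i_N})$ for large $N$ does not change the limit set, and one checks (again via Koebe) that $\sum_{(i_1,\dots,i_N)}\lambda_{i_1}\cdots\lambda_{i_N}$ behaves like $(\sum_i \lambda_i)^N$ up to a multiplicative distortion constant; hence for a suitable small $t$ the quantity $\sum \lambda_i^t$ can be made $\geq 1$ provided $m\geq 2$, since then $\sum_i\lambda_i^t\to m$ as $t\to 0$. Thus all that is really needed is $m\geq 2$ disjoint iterated islands, which Lemma~\ref{lem:modifiedislands} together with perfectness of $J(f)$ certainly provides. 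Applying Lemma~\ref{lem:limitsets} then yields $\dim_H(L)\geq t>0$, and since $L$ is compact, forward invariant (it is the limit set of iterates of $f$, hence invariant under $f$ after accounting for the chart composition --- strictly, one works with the backward-invariant repeller and notes it equals a compact forward-invariant hyperbolic set in the sense of Definition~\ref{defn:hyperbolicdimension}) and uniformly expanding, it is a hyperbolic set, so $\dim_{\hyp}(f)\geq t>0$.
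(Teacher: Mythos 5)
Your overall strategy (use Lemma~\ref{lem:modifiedislands} to manufacture an elementary conformal iterated function system out of inverse branches of iterates of $f$, then apply Lemma~\ref{lem:limitsets} with small $t$) is the right one, and it is what the paper does. But there is a genuine gap in the construction of the IFS.

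You never arrange for the distinguished target domain $V_j$ to intersect $J(f)$. As a result you look for islands over $V_j$ inside a \emph{separate} disk $V$ near $J(f)$, and are then forced to insert a fixed conformal chart $\phi\colon V_j\to V$ to turn the island maps into a self-map system. This destroys the argument: the maps $g_i = (\phi\circ f^{n_i}|_{G_i})^{-1}$ are not inverse branches of iterates of $f$, so the limit set $L$ of the system is invariant under the composed maps $\phi\circ f^{n_i}$, not under $f$. Neither $L$ nor its forward orbit under $f$ need be a compact forward-invariant set for $f$, so Definition~\ref{defn:hyperbolicdimension} does not apply, and the parenthetical ``after accounting for the chart composition'' does not repair this --- there is no $f$-repeller here.

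The missing idea, which is exactly what the paper does, is to choose the $k$ Jordan domains $V_1,\dots,V_k$ so that \emph{each intersects $J(f)$}. Then the distinguished domain $U$ from Lemma~\ref{lem:modifiedislands} intersects $J(f)$, so (using perfectness of $J(f)$ to pick two disjoint small open subsets of $U$ meeting $J(f)$) Lemma~\ref{lem:modifiedislands} produces two disjoint islands $V_1,V_2\Subset U$ of iterates $f^{n_1},f^{n_2}$ over this same $U$. The inverse branches $g_j\colon U\to V_j$ are then honest inverse branches of iterates of $f$, so they form an elementary CIFS whose limit set, together with its finitely many forward $f$-images, is compact, forward-invariant, and expanding --- a hyperbolic set. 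With that fix, your dimension estimate closes the argument, though it is over-elaborate: for $m\geq 2$ branches one has $\sum_{j=1}^m\lambda_j^t\to m\geq 2$ as $t\to 0$, so $\sum\lambda_j^t\geq 1$ for all sufficiently small $t>0$; no iteration of the IFS is needed.
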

 \begin{proof}
   Let $k$ be as in Lemma \ref{lem:modifiedislands}, and pick 
    $k$ Jordan domains $U_1,\dots,U_k$, each of which intersects
    the Julia set. Then for at least one of these domains,
    say $U = U_j$, there are islands of some iterate of $f$ over $U$
    near every point of the Julia set. Using the fact that $J(f)$ is perfect,
    we can thus find two (in fact infinitely many) domains
    $V_1,V_2\Subset U$, with $\cl{V_1}\cap \cl{V_2}=\emptyset$, such 
    that some iterate of $f$ maps $V_j$ conformally to $U$. 
 
   The corresponding inverse branches $g_j:U\to V_j$ then form an
    elementary conformal iterated function system, and the
    limit set $L$ of this system has Hausdorff dimension strictly
    greater than $0$. Clearly $L$, together with its finitely many forward
    images is a hyperbolic set for $f$, and the claim follows. 
 \end{proof}

 \subsection*{Hausdorff dimension and backward invariance}
   We include a proof of the following simple lemma. 
  \begin{obs}[Hausdorff dimension of backward invariant sets]
      \label{obs:dimension}
    Let $f\in \A(W,X)$ be a non-elementary Ahlfors islands map, and let
     $B\subset X$ be backward invariant under $f$; i.e. $f^{-1}(B)\subset B$. 

     If $U\subset X$ is an open set intersecting $J(f)$, then 
      $\dim_H(B\cap U) = \dim_H(B)$. 
  \end{obs}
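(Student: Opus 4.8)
The plan is as follows. The inequality $\dim_H(B\cap U)\le\dim_H(B)$ is immediate from monotonicity, so only $\dim_H(B\cap U)\ge\dim_H(B)$ needs an argument, and if $\dim_H(B)=0$ there is nothing to prove. Fix $t$ with $0<t<\dim_H(B)$; I will show $\dim_H(B\cap U)\ge t$ and then let $t\uparrow\dim_H(B)$. The guiding principle is that one should \emph{not} try to transport $B$ forward --- forward images of $B$ need not lie in $B$ --- but should instead pull $B$ back along $f$: an inverse branch of an iterate $f^n$ carries $B$ into $B$, since iterating $f^{-1}(B)\subset B$ gives $f^{-n}(B)\subset B$, and it preserves Hausdorff dimension, being conformal. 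So it is enough to produce, inside $U$, a conformal copy of a subset of $B$ of dimension at least $t$.

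First I would locate a suitable piece of $B$. Since $t>0$ and $\mathcal H^t(B)>0$ (here $\mathcal H^t$ denotes $t$-dimensional Hausdorff measure), let $O_0$ be the union of all open balls $\D\subset X$ with $\mathcal H^t(B\cap\D)=0$. By second countability of $X$ (Rad\'o's theorem) this cover of $O_0$ has a countable subcover, so $\mathcal H^t(B\cap O_0)=0$ by countable subadditivity. Hence $F:=X\setminus O_0$ is closed with $\mathcal H^t(B\cap F)>0$; in particular $F$ is infinite, since a finite set is $\mathcal H^t$-null for $t>0$, and by construction $\mathcal H^t(B\cap\D_r(z))>0$ --- so $\dim_H(B\cap\D_r(z))\ge t$ --- for every $z\in F$ and every $r>0$.

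Next, let $k$ be the constant of Lemma~\ref{lem:modifiedislands}, choose $k$ distinct points $z_1,\dots,z_k\in F$, and pick radii $r_i>0$ small enough that the disks $V_i:=\D_{r_i}(z_i)$ are simply connected with Jordan-curve boundary and have pairwise disjoint closures. By Lemma~\ref{lem:modifiedislands} there is an index $j$ such that every open set meeting $J(f)$ --- in particular the given $U$ --- contains an island of some iterate $f^n$ over $V_j$, i.e.\ a domain $G\subset U$ with $f^n\colon G\to V_j$ a conformal isomorphism. Put $g:=(f^n|_G)^{-1}\colon V_j\to G$. Being conformal with $g(V_j)=G\subset U$, the map $g$ preserves the Hausdorff dimension of every subset of $V_j$ (it is bi-Lipschitz on each relatively compact subdomain, and $\dim_H$ is countably stable); and by backward invariance $g(B\cap V_j)\subset f^{-n}(B)\cap U\subset B\cap U$. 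Therefore
\[
  \dim_H(B\cap U)\;\ge\;\dim_H\bigl(g(B\cap V_j)\bigr)\;=\;\dim_H(B\cap V_j)\;\ge\;t ,
\]
the last inequality because $z_j\in F$. Letting $t\uparrow\dim_H(B)$ finishes the proof.

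The only substantial input is Lemma~\ref{lem:modifiedislands} --- the Ahlfors islands/blow-up property of $J(f)$ --- used to plant a conformal copy of one of our disks inside the prescribed set $U$. The subtlety to watch is the order of quantifiers: we may choose the $k$ disks $V_i$ freely, but it is the lemma that decides which one, $V_j$, receives an island in $U$; arranging that \emph{all} of $V_1,\dots,V_k$ see dimension at least $t$ of $B$ (possible exactly because $F$ is infinite) ensures the disk handed to us is usable. No regularity or distortion estimate for $B$ itself is ever needed, which is why the argument works for a completely arbitrary backward-invariant set $B$.
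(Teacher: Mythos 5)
Your proof is correct and follows essentially the same strategy as the paper: locate a family of disks on which $B$ attains (nearly) full dimension, invoke the Ahlfors islands property to plant a conformal copy of one of these disks inside $U$, and use backward invariance of $B$ plus conformal invariance of Hausdorff dimension. Your version is a bit more explicit in two places --- the careful construction of the "support set" $F$ yielding disks with $\dim_H(B\cap V_i)\ge t$ (the paper simply asserts that such disks exist), and the direct appeal to Lemma~\ref{lem:modifiedislands} rather than re-deriving the case split between islands-over-the-boundary and the normal-families version --- but these are presentational refinements, not a different route.
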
 
  \begin{proof}
   It follows easily from the definition of Hausdorff dimension that
    there is a sequence $D_1,D_2,\dots$ of Jordan disks with 
    pairwise disjoint closures such that 
       $\dim_H(B \cap D_j) \to \dim_H(B)$.

   If $U$ contains some point where $f^j$ is undefined for some $j\geq 0$,
    then it follows from the Ahlfors islands property that 
    $U$ contains an island over $D_j$ for infinitely many $j$. Otherwise,
    this follows from the usual five-islands theorem for (non-)normal
    families. 

   So in either case (since conformal maps preserve Hausdorff dimension), 
    we have
     \[ \dim_H(B\cap U) \geq \dim_H(B\cap D_j) \]
    for infinitely many $j$, which proves the claim.
  \end{proof} 

 \section{Hyperbolic dimension and radial Julia sets}  \label{sec:proof}

  \begin{thm}
   Let $f\in \A(W,X)$ be any non-elementary Ahlfors islands map. 
   Then 
    \[ \dim_{hyp}(f) = \dim_H(J_r(f)). \]
  \end{thm}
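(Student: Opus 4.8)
The inequality $\dim_{\hyp}(f)\le\dim_H(J_r(f))$ is Lemma~\ref{lem:hyperbolicsets}, so the content of the theorem is the reverse inequality, and for this it suffices to produce, for every $t<\dim_H(J_r(f))$, a hyperbolic set of Hausdorff dimension at least $t$. The plan is to build such a set as the limit set of an elementary conformal iterated function system whose generators are inverse branches of iterates of $f$, and to read off its dimension from Lemma~\ref{lem:limitsets}.

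The first step is to work at one fixed scale. By Lemma and Definition~\ref{lemdef:disksofunivalence} every point of $J_r(f)$ has a disk of univalence, and these may be taken from a fixed countable family, so $J_r(f)$ is the countable union, over such disks $D=\D_\delta(w_0)$, of the sets $A_D$ of points admitting $D$ as a disk of univalence. Fix $t<\dim_H(J_r(f))$ and a disk $D$ with $\dim_H(A_D)>t$; note $w_0\in J(f)$. Since $D$ records where orbits return rather than where the points of $A_D$ lie, I would next push the set into $D$: splitting $A_D$ according to which iterate $n$ is a good return, choose $n$ with the corresponding piece still of dimension $>t$, and replace it by its $f^n$-image. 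Using that deep good returns of a point nest inside shallower ones, one checks that $f^n$ sends points with disk of univalence $D$ to points with disk of univalence $D$, that the image lies in $D$, and that it has unchanged Hausdorff dimension (because $f^n$ is conformal on the relevant pullback neighbourhoods and conformal maps preserve Hausdorff dimension --- here the second countability of $X$ is convenient). So we may assume $A:=A_D\subset D$ with $\dim_H(A)>t$, and we fix $s$ with $t<s<\dim_H(A)$.

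For $z\in A$ and each good return $n$ set $U_{z,n}=\Comp_z(\D_\delta(f^n z))$, so that $f^n\colon U_{z,n}\to D$ is a conformal isomorphism; the inverse branch $g_{z,n}\colon D\to U_{z,n}$ has universally bounded distortion (as $\D_{2\delta}(w_0)$ also pulls back univalently), one has the identity $\lambda_{z,n}:=\inf_D\|Dg_{z,n}\|=1/\sup_{U_{z,n}}\|Df^n\|$, and by Lemma and Definition~\ref{lemdef:disksofunivalence} the set $U_{z,n}$ is comparable to a ball of radius $\asymp\lambda_{z,n}$, with $\lambda_{z,n}\to 0$ as $n\to\infty$. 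Discarding returns not deep enough to force $U_{z,n}\Subset D$ and $\operatorname{diam}U_{z,n}<\rho$, the family $\{U_{z,n}\}$ is still a fine cover of $A$; extract a disjoint subfamily $\{U_{z_i,n_i}\}_i$ whose fivefold enlargements cover $A$, and write $\lambda_i:=\lambda_{z_i,n_i}$. Then $\sum_i\lambda_i^{\,s}\ge c_0\,\mathcal H^s_\infty(A)$, where the $s$-dimensional Hausdorff content $\mathcal H^s_\infty(A)$ is positive since $s<\dim_H(A)$; and because each $\lambda_i=O(\rho)$ while $t<s$,
\[
  \sum_i\lambda_i^{\,t}\;\ge\;c_1\,\rho^{\,t-s}\sum_i\lambda_i^{\,s}\;\ge\;c_2\,\rho^{\,t-s}\,\mathcal H^s_\infty(A),
\]
which exceeds $1$ once $\rho$ is small enough, as $t-s<0$. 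This is the crux. An iterated function system read off directly from a Vitali cover only gives $\sum_i\lambda_i^{\,s}\ge(\text{positive constant})$, which is too weak to feed into Lemma~\ref{lem:limitsets}; passing to deep returns, so that all contractions are tiny, together with the strict inequality $s<\dim_H(A)$, is what pushes the sum above $1$. The remaining points that need care are the nesting statement in the second step and the disjointification here (ensuring that the $U_{z_i,n_i}$ themselves, not merely concentric balls, are disjoint and still carry most of the mass).

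Finally, choose a finite subfamily $g_1,\dots,g_m$ with $\sum_{j=1}^m\lambda_j^{\,t}\ge 1$. The $U_j:=g_j(D)$ are simply connected, pairwise disjoint, with $\cl{U_j}\subset D$, so $(g_j)$ is an elementary conformal iterated function system on $D$, and by Lemma~\ref{lem:limitsets} its limit set $L$ satisfies $\dim_H(L)\ge t$. Moreover $L\subset J_r(f)$: each $x\in L$ lies in arbitrarily deep nested images $g_{j_1}\circ\cdots\circ g_{j_k}(D)$, so along the corresponding return times $f$ maps a shrinking neighbourhood of $x$ conformally onto $\D_{2\delta}(w_0)$ and pulls $\D_\delta(w_0)$ back univalently, and $x\in J(f)$ because preimages of $J(f)\cap\D_\delta(w_0)$ accumulate on $x$. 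Hence the finite union $K$ of the forward images $f^j(L)$ is compact, forward invariant and contained in $J_r(f)$, so $K$ is a hyperbolic set by Lemma~\ref{lem:hyperbolicsets}, with $\dim_H(K)\ge\dim_H(L)\ge t$. As $t<\dim_H(J_r(f))$ was arbitrary, $\dim_{\hyp}(f)\ge\dim_H(J_r(f))$, which together with Lemma~\ref{lem:hyperbolicsets} gives equality.
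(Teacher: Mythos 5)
Your proposal is essentially the paper's proof: decompose $J_r(f)$ by disks of univalence, localize to one disk $D$, Vitali-cover a positive-dimension piece $A\subset D$ by univalent pullbacks of $D$ at a uniformly small scale, extract a finite CIFS with $\sum\lambda_j^{t}>1$, and apply Lemma~\ref{lem:limitsets}. The two places you flag as needing care are exactly the ones the paper handles more cleanly: instead of pushing $A_D$ into $D$ via a well-chosen $f^n$-image (where one must verify the nesting of good returns), the paper notes that $J_r(D)$ is backward invariant up to a countable set and invokes Observation~\ref{obs:dimension} to get $\dim_H(J_r(D)\cap D)=\dim_H(J_r(D))$ directly; and instead of running the $5r$-covering lemma on the non-round sets $U_{z,n}$, the paper covers $A$ by the round balls $\D_{C/\theta_a}(a)\supset U_a$ furnished by Lemma~\ref{lemdef:disksofunivalence}, so disjointness of the extracted balls immediately gives disjointness of the corresponding $U_a$'s.
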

  \begin{proof}
   The inequality ``$\leq$'' follows from Lemma \ref{lem:hyperbolicsets}. 
   So set $d:=\dim_H(J_r(f))$ and let $\eps>0$. We shall show that
   $\dim_{\hyp}(f)\geq d' := d-\eps$ by constructing a hyperbolic
   set of dimension at least $d'$.

   Pick a countable basis of the topology of $X$ consisting of
    simply connected disks $D_i$. 
    Let $J_r(D_i)$ be the set of points
    in $J_r(f)$ for which $D_i$ is a disk of univalence
    as defined in Lemma \ref{lemdef:disksofunivalence}. Note that, if
    $z\in J_r(D_i)$ and $w\in W$ with $f(w)=z$, then either $w\in J_r(D_i)$ or
    $w$ is a critical point. So $J_r(D_i)$ is the union of a backward invariant
    set and a countable set, and hence all nonempty open subsets
    of $J_r(D_i)$ have the same Hausdorff dimension by Observation
    \ref{obs:dimension}. 

  Since $J_r(f)$ is the countable union of the $J_r(D_i)$, we can find 
   an index $i$ such that $J_r(D_i)$ has Hausdorff dimension greater than 
   $d'$. Set $D := D_i$ and let $A := J_r(D)\cap D$; then
   $\dim_H(A) > d'$ by the above. Hence, we can pick
   $\delta>0$ such that any covering of $A$ by sets $U_j$ of
   diameter at most $\delta$ has
     \[ \sum_j \diam(U_j)^{d'} > (10\cdot C)^{d'}, \]
   where $C$ is the constant from Lemma \ref{lemdef:disksofunivalence}. 

   Now, for every point $a\in A$, pick a univalent pullback $U_a$
    of $V$ around $a$ as in Lemma \ref{lemdef:disksofunivalence} such that 
     \[ \theta_a :=  \|Df^n|_{U_a}\| > \frac{10C}{\delta} \] 
    and $\cl{U_a}\subset V$. Then $A$ is covered by the open balls
       \[ \D_{C/ \theta_a}(a), \]
    and by a standard covering theorem 
     (see \cite[Theorem 2.1]{mattilageometryofmeasures}
      or \cite[Theorem 2.84 and corollaries]{federermeasuretheory}), 
     we can pick a subsequence
     $a_j$ such that the balls $\D_{C/\theta_{j}}(a_j)$ are disjoint,
     (where $\theta_j = \theta_{a_j}$) 
     while 
       \[ A \subset \bigcup \D_{5C/\theta_{j}}(a_j). \]

   By the condition on the size of
     $\theta_a$, these balls have diameter
    $10C/\theta_j < \delta$. So by choice of $\delta$, we  have
        \[ \sum_{j=1}^{\infty} 
      \left(\frac{10C}{\theta_j}\right)^{d'} > (10C)^{d'}. \]
   If we pick $N$ sufficiently large, set $\lambda_j := 1/\theta_j$ 
    and consider the elementary conformal iterated function system given by the
    branches $g_j$ of $f^{-n_j}$ which take $D$ to $U_j$, we have
       \[ \sum_{j=1}^N \lambda_j^{d'} > 1, \]
    and hence the limit set $L$ of this system satisfies
      \[ \dim_H(L) \geq d' \]
    by Lemma \ref{lem:limitsets}. 
    We have $L\subset A\subset J_r(f)$ by construction, and $L$ together
     with finitely many forward images forms a compact, forward
     invariant set, which is hence a hyperbolic set for $f$. 
  \end{proof}

\appendix

 \section*{Appendix: Hyperbolic dimension in logarithmic tracts} 
    \label{sec:logarithmictracts}
 \stepcounter{section}

  If $U\subset\C$ is an unbounded Jordan domain, and
   $f:U\to\{|z|>R\}$ (where $R>0$) is a universal covering, then
   Bara\'nski, Karpi\'nska and Zdunik \cite{baranskikarpinskazdunik}
   show that there is a hyperbolic set $K\subset U$
   for $f$ with $\dim_H(K)>1$.
   We remark that 
   this carries over to a slightly more general setting where the asymptotic
   curves only \emph{accumulate} on the asymptotic value:

 \begin{thm}[Hyperbolic dimension and logarithmic tracts]
   \label{thm:logtracts}
   Let $X$ be a compact Riemann surface, let
    $D\subset X$ be a Jordan domain, and let $a\in D$. 

   Suppose that $G\subset X\setminus \{a\}$ 
    is a simply connected domain, and that
      \[ f:G \to D\setminus \{a\} \]
   is a universal covering map. Suppose furthermore that there is
   a sequence $(z_n)$ in $G$ with $z_n\to a$ and 
   $f(z_n)\to a$. 

   Then $G$ contains a hyperbolic set for $f$ of dimension greater than one.
     \qedd
 \end{thm}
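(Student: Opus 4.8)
The plan is to reduce to the model situation of Bara\'nski, Karpi\'nska and Zdunik~\cite{baranskikarpinskazdunik} by lifting everything to the universal cover of $D\setminus\{a\}$ via the exponential map, at which point $f$ becomes (up to a biholomorphic change of coordinates near $a$) essentially a map defined on a half-plane-like region mapping onto a punctured disk; the hypothesis that there is a sequence $z_n\to a$ with $f(z_n)\to a$ is exactly what is needed to run the construction there. First I would choose a local holomorphic coordinate $\psi$ on $X$ near $a$ with $\psi(a)=0$, so that, after shrinking $D$, we may assume $D$ is (biholomorphic to) a round disk $\{|w|<r\}$ and $a$ corresponds to $0$; the metric of $X$ near $a$ is comparable to the Euclidean metric in this chart, so distances, derivatives, and Hausdorff dimension are all distorted only by bounded factors, and it suffices to produce a hyperbolic set of dimension $>1$ for the map in this chart.

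Next I would pass to logarithmic coordinates. Let $\exp\colon \mathbb{H}_{\ell}:=\{\operatorname{Re}\zeta<\log r\}\to \{0<|w|<r\}$ be the covering $\zeta\mapsto e^{\zeta}$. Since $G$ is simply connected and $a\notin G$, and $f\colon G\to D\setminus\{a\}$ is a universal covering, I can choose a conformal isomorphism $\varphi\colon \Omega\to G$ from some domain $\Omega$ together with a holomorphic lift $F\colon\Omega\to\mathbb{H}_{\ell}$ of $f\circ\varphi$, i.e.\ $\exp\circ F = \psi\circ f\circ\varphi$; because $f$ is a universal covering onto the punctured disk, $F$ is a conformal isomorphism onto a subdomain $F(\Omega)\subset\mathbb{H}_{\ell}$ which is invariant under the deck transformation $\zeta\mapsto\zeta+2\pi i$, and after translating we may assume $F(\Omega)$ contains a right half-plane-type tract. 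The accumulation hypothesis $z_n\to a$, $f(z_n)\to a$ translates into: there are points $\zeta_n\in\Omega$ with $\operatorname{Re}\zeta_n\to-\infty$ (their $\varphi$-images tend to $a$) and $\operatorname{Re}F(\zeta_n)\to-\infty$. In other words, the logarithmic transform $F$ has the property that some orbit-piece goes deep into the tract and is mapped deep into the tract; this is precisely the situation treated in \cite{baranskikarpinskazdunik}, whose argument constructs an iterated function system from inverse branches of $F$ (equivalently of $\exp^{-1}\circ\psi\circ f$) defined on deep disks, with contraction factors comparable to $1/|F'|$, and shows that the associated pressure-type sum forces the limit set to have dimension strictly above $1$.

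I would then run that construction to obtain an elementary conformal iterated function system $(g_j)$ in the sense of the excerpt, whose generators are inverse branches of iterates of $f$ mapping a fixed disk $\mathcal D\Subset D\setminus\{a\}$ into pairwise disjoint subdisks, with $\sum \lambda_j^{\,t}\ge 1$ for some $t>1$ where $\lambda_j=\inf\|Dg_j\|$. By Lemma~\ref{lem:limitsets} the limit set $L$ has $\dim_H(L)\ge t>1$, and by construction $L$ lies in $G$, is forward invariant, and carries a derivative bound $\|Df^n|_L\|>\lambda>1$; together with its finitely many forward images it is a hyperbolic set for $f$, proving the theorem. The key quantitative input — that the branches can be chosen with $\sum\lambda_j^{\,t}\ge1$ for some $t>1$ — is exactly Theorem~1 of \cite{baranskikarpinskazdunik} applied in the logarithmic coordinate; I would cite it rather than reprove it.

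\textbf{Main obstacle.} The delicate point is not the covering-space bookkeeping but the metric comparison: one must check that passing from $X$ to the local chart $\psi$ near $a$ and then to the logarithmic coordinate changes the metric (and hence the derivative norms that enter the pressure sum) only by factors that are bounded on the relevant deep subdomains. Near $a$ the chart $\psi$ is bi-Lipschitz, so that is harmless; the genuinely non-trivial estimate is that the hypothesis ``$f(z_n)\to a$ for some $z_n\to a$'' really does place us in the regime where \cite{baranskikarpinskazdunik} applies, i.e.\ that accumulation of the asymptotic curves on the asymptotic value (rather than eventual constancy) still yields points of the tract that are mapped arbitrarily deep into the tract with the requisite derivative growth. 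Verifying that the Bara\'nski--Karpi\'nska--Zdunik argument only uses such accumulation — which is why the statement is only a ``minor generalization'' — is where the real work lies, and I would isolate it as a short lemma about the logarithmic transform $F$ before invoking their theorem.
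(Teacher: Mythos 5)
Your overall strategy — pass to logarithmic coordinates and reduce to the Bara\'nski--Karpi\'nska--Zdunik construction, noting that the accumulation hypothesis $z_n\to a$, $f(z_n)\to a$ is what allows the reduction — is the same one the paper takes, and you have correctly located where the real work sits. However, you stop exactly at the point the paper treats as the one genuinely new step, and you do not fill it. The paper's observation is specific: fix an annulus $A$ centered at $a$ of large modulus in a local chart. In the original BKZ setting the $f$-preimage $S$ of $A$ inside the tract tends to $a$ in both directions, so by hyperbolic metric estimates $S$ must reach the outer boundary and hence cross $A$; this crossing is what feeds the iterated function system. Under the weaker accumulation hypothesis the strip $S$ need \emph{not} tend to $a$ at all, and the argument must be replaced: one shows that $S$ still crosses $A$ at least once, and the reason given is geometric function theory — the fact that radial limit points of a conformal map are dense in the boundary of the image, combined with the hypothesis that some $z_n\to a$ have $f(z_n)\to a$. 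Your ``Main obstacle'' paragraph names this issue but offers no mechanism; you would need to state and prove the lemma you allude to (``$S$ crosses $A$''), invoking the density of radial limit points, before you can legitimately hand off to the BKZ pressure estimate. As written, the proposal is the right plan with the decisive step left as a declared gap, whereas the paper, though also brief, does supply the key geometric ingredient that closes it.
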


 In the setting of \cite{baranskikarpinskazdunik}, one
  has the stronger property that $z_n\to a$ whenever $f(z_n)\to a$ in $G$.
  We note that it is easy to construct an Ahlfors function with a logarithmic
  tract which does not satisfy this stronger condition, but to which the
  theorem applies.
  For example, we can consider a function of the form
  $j\circ\phi$, where $j:\D\to\Ch\setminus\{0,1,\infty\}$
  is the elliptic modular function, while $\phi:\D\to\C$ is a
  conformal map which takes a suitable boundary point to a 
  nontrivial prime end whose impression contains the asymptotic value
  $\infty$.

 For a more natural example, 
  suppose that $f:\D\to\C$ is a Valiron function (as discussed
  in Section \ref{sec:defns}) which has an asymptotic value over $\infty$.
  (Such a function can easily be constructed by the same technique as in
  \cite[Example 1]{ripponbarthexceptional}.) Then the logarithmic curve itself spirals in to
  the boundary. Postcomposing $f$ with a suitable M\"obius transformation
  which takes $\infty$ to the unit circle gives rise to an Ahlfors
  islands map with the aforementioned property. 

 The proof of our more general version is analogous
  to that in 
  \cite{baranskikarpinskazdunik};
  the underlying
  ideas can be traced to Stallard's original argument
  from \cite{stallardentirehausdorff}. We will briefly
  explain one step in the proof which differs slightly 
  in our setting.
 
 Consider an annulus $A$ centered at $a$ (in some fixed local
  coordinates), of sufficiently large modulus and sufficiently close to $a$. 
  Then, in the setting of \cite{baranskikarpinskazdunik}, 
  the preimage of this annulus under $f$ is an infinite strip $S$
  tending to
  $a$ in both directions. By basic estimates on the hyperbolic metric,
  $S$ must contain points on the outside of the annulus, and hence
  crosses $A$. This fact allows
  Bara\'nski, Karpi\'nska and Zdunik to construct an iterated function
  system (in logarithmic coordinates) whose limit set has Haudorff dimension
  greater than one. 

 With the more general assumptions in Theorem \ref{thm:logtracts}, 
  it is no longer true that the strip $S$ must tend to $a$. (Recall the
  examples discussed above.) However,
  $S$ still crosses the annulus $A$ at least once, and possibly several times. 
  (This 
  follows from the assumptions on $f$ and
  well-known results of geometric function theory, 
  e.g.\ the fact that radial limit points of a conformal map
  are dense in the boundary of its image.) The remainder of the proof
  goes through exactly as in \cite{baranskikarpinskazdunik}; we 
  refer the reader to this article for further details.

\bibliographystyle{hamsplain}
\bibliography{/Latex/Biblio/biblio}

\end{document}